\def \C{\mathbb{C}}
\def \Z{\mathbb{Z}}
\def \R{\mathbb{R}}
\def \Q{\mathbb{Q}}
\def \L{\mathcal{L}}
\def \k{{\bf k}}
\def \V{\mathcal{V}}
\def \D{\mathcal{D}}
\def \P{\mathcal{P}}
\def \GL{\textup{GL}}
\def \SL{\textup{SL}}
\def \vol{\textup{vol}}
\def \End{\textup{End}}
\def \Fl{F\ell}
\def \GZ{\textup{GZ}}
\def \w{{\underline{w}_0}}
\theoremstyle{plain}
\newtheorem{Th}{Theorem}[section]
\newtheorem{Lem}[Th]{Lemma}
\newtheorem{Prop}[Th]{Proposition}
\newtheorem{Cor}[Th]{Corollary}
\theoremstyle{definition}
\newtheorem{Ex}[Th]{Example}
\newtheorem{Def}[Th]{Definition}
\newtheorem{Rem}[Th]{Remark}
\newtheorem*{REM}{Remark}
\begin{document}

\title{On a notion of anticanonical class for families of convex polytopes}
\author{Kiumars Kaveh}
\address{Department of Mathematics, University of Pittsburgh,
Pittsburgh, PA, USA.}
\email{kaveh@pitt.edu}
\author{Elise Villella}
\address{Department of Mathematics, University of Pittsburgh,
Pittsburgh, PA, USA.}
\email{emv23@pitt.edu}

%\begin{abstract}
%\end{abstract}

\thanks{The first author is partially supported by a
Simons Foundation Collaboration Grants for Mathematicians and a National Science Foundation Grant (DMS-1601303).}

\keywords{Convex polytope, virtual polytope, Gelfand-Zetlin polytope, (anti)canonical class, polytope algebra, group compactification, toric variety, spherical variety} 
\subjclass[2010]{14M25, 52B20, 14M27}
\date{\today}
\maketitle

\setcounter{tocdepth}{1}
\tableofcontents

%Notation: 
%Change $k$ to $m$ for Hilbert function variable? (not to get confused with $\kappa$)
%Change \Delta to P for polytopes.

\begin{center}
{\bf This is a preliminary version, comments are welcome.}
\end{center}

%{\bf Mention Bott-Samelson varieties and Valentina's polytopes.}
%{\bf Mention Fano varieties and reflexive polytopes, when is the anticanonical polytope non-virtual?}

\section*{Introduction}
The purpose of this note is to give a generalization of the statement that the anticanonical class of a (smooth) projective toric variety is the sum of invariant prime divisors, corresponding to the rays in its fan (or facets in its polytope), to some other classes of varieties with algebraic group actions. 

To this end, we suggest an analogue of the notion of anticanonical class (of a compact complex manifold) for linear families of convex polytopes. This is inspired by the Serre duality for smooth projective varieties as well as the Ehrhart-Macdonald reciprocity for rational polytopes. The main examples we have in mind are: (1) The family of polytopes normal to a given fan (which corresponds to the case of toric varieties). (2) The family of Gelfand-Zetlin polytopes (which corresponds to the case of the flag variety). (3) The family of Newton-Okounkov polytopes for a (smooth) group compactification. 
%Another class of examples which fits nicely in this approach is that of group compactifications. We see that simple and purely combinatorial observations/calculations immediately recover formulae for the anticanonical class of these varieties. 
More generally, our approach can be applied to projective spherical varieties as well as Schubert and Bott-Samelson varieties.

We should point out that formulae for anticanonical classes of projective spherical varieties are known (\cite{Brion-curve}, \cite[Proposition 4]{Rittatore} and \cite[Section 3]{GH}). These formulae represent the anticanonical class as a linear combination of the $G$-stable divisors and the $B$-stable divisors that are not $G$-stable (the so-called {\it colors}). Our method identifies the (virtual) polytope associated to the anticanonical class and hence describes the anticanonical line bundle. Nevertheless, we hope the present article provides a combinatorial and convex geometric angle on the notion of canonical class from geometry.
%We expect that our approach can be used to recover formula for anticanonical classes of some other classes of varieties.
%Review toric varieties and lattice polytopes and canonical class. 
%Recall Serre duality and Ehrhart duality. Recall GZ polytopes and other examples such a group compactifications and wonderful compactification of a group. 

All the varieties we consider are over an algebraically closed field of characteristic $0$. Let $X$ be an $n$-dimensional normal projective variety with an action of an algebraic group $G$. Suppose there exists a full dimensional cone $\mathcal{C}$ in the $G$-ample cone of $X$ such that for any $G$-linearized very ample line bundle $L \in \mathcal{C}$ we can associate a rational convex polytope $\Delta(L) \subset \R^n$ such that the following hold:
\begin{itemize}
\item[(i)] For any two very ample $L, L' \in \mathcal{C}$ we have $\Delta(L \otimes L') = \Delta(L) + \Delta(L')$, where the righthand side is the Minkowski addition of polytopes.
\item[(ii)] For any very ample $L \in \mathcal{C}$ we have:
\begin{equation}  \label{equ-intro-dim-H^0}
\dim(H^0(X, L)) = |\Delta(L) \cap \Z^n|.
\end{equation}
In other words, the Ehrhart function of $\Delta(L)$ coincides with the Hilbert function of the ring of sections $A(L) = \bigoplus_{k \geq 0} H^0(X, L^{\otimes k})$. 
\end{itemize}

The property (i) above implies that $\Delta: L \mapsto \Delta(L)$ can be extended by linearity to a map from $\textup{Pic}_G(X)$ to the vector space of virtual polytopes in $\R^n$ (see Section \ref{subsec-prelim-polytopes} for a review of the notion of virtual polytope).

Toric varieties and flag varieties (of general connected reductive groups) are two classes of examples where we have such assignment of polytopes to line bundles. %More generally one can associate polytopes to line bundles on flag varieties of arbitrary reductive groups (see Section \ref{subsec-flag-var-Schubert-var}). 
A large class of varieties where one has such assignement of polytopes to line bundles is the class of spherical varieties (see Remark \ref{rem-NO-polytope-spherical}). 
Beside toric varieties and flag varieties, this includes the class of group compactifications (see Section \ref{subsec-antican-group-compactification}). This assignment satisfies the property (ii) above. The property (i) may fail if we consider the whole ample cone (see \cite[Example 3.2]{Kaveh-note-spherical}). Nevertheless, in light of Proposition \ref{prop-family-piecewise-lin}, the property (i) holds for some subcone of maximal dimension in the ample cone.

Yet other classes of varieties where one can assign polytopes to line bundles are Schubert varieties (of general connected reductive groups) as well as their Bott-Samelson varieties (see \cite{Caldero, Fujita, HY, MSS, Valya-Bott-Samelson}). 

%{More generally one can associate polytopes to $G$-linearized line bundles on spherical varieties (\cite{Okounkov, AB, Kaveh-sph-degen}). 

When $X$ is smooth, by the Serre duality theorem the anticanonical class $K_X^{-1} \in \textup{Pic}(X)$ has the property that: 
\begin{equation}  \label{equ-intro-linebundle}
\dim(H^0(X, L \otimes K_X^{-1})) = \chi(X, L \otimes K_X^{-1}) = (-1)^n \chi(X, L^{-1}),
\end{equation}
where $\chi$ denotes the Euler characteristic.
%{\bf Recall the definition of canonical class for a normal variety by taking closure of the canonical class on the smooth locus. It is a Weil divisor. If $X$ is Cohen-Macaulay then it is the dualizing sheaf also. Then we need that for any Weil divisor class we have a polytope which gives the dimension of sections. But can any Weil divisor be $G$-linearized?}
This combined with the Ehrhart-Macdonald reciprocity implies that: 
\begin{equation}   \label{equ-intro-antican-linebundle2}
|\Delta(L \otimes K_X^{-1}) \cap \Z^n| = |\Delta^\circ(L) \cap \Z^n|, 
\end{equation}
where $P^\circ$ denotes the relative interior of a polytope $P$.

Let $\P_n(\Q)$ denote the collection of all rational convex polytopes in $\R^n$. It is closed under Minkowski addition and multiplication by positive rational numbers. We also denote by $\V_n(\Q)$ the $\Q$-vector space of virtual polytopes spanned by $\P_n(\Q)$ (see Section \ref{subsec-prelim-polytopes}). Let $C$ be a full dimensional rational convex polyhedral cone in some finite dimensional vector space $V$. We call a linear map $\Delta: C \to \P_n(\Q)$ a {\it linear family of convex polytopes}. 

In Section \ref{subsec-polytope-alg}, extending the Khovanskii-Pukhlikov description of the cohomology ring of a smooth projective variety (\cite{KhP}), we associate an algebra to a linear family of polytopes which we call its {\it polytope algebra}. In \cite{Kaveh-note-spherical} the polytope algebra of the Gelfand-Zetlin family is used to recover the Borel description of the cohomology ring of the flag variety. The interesting work \cite{KST} uses polytope algebra of the Gelfand-Zetlin family to address problems in Schubert calculus. %In a future work, we make a connection between tropical geometry and the polytope algebra of the Gelfand-Zetlin family. 

We will use \eqref{equ-intro-antican-linebundle2} as the definition of an anticanonical (virtual) polytope for a linear family of polytopes. More precisely, let $\Gamma \subset V$ be a full rank lattice in $V$. We say that a virtual polytope $\Delta(\kappa)$, $\kappa \in \Gamma$, is an anticanonical (virtual) polytope in the family with respect to $\Gamma$ if for all $\gamma \in C^\circ \cap \Gamma$ such that $\gamma - \kappa \in C \cap \Gamma$ we have:
\begin{equation}   \label{equ-intro-antican-polytope}
|\Delta(\gamma - \kappa) \cap \Z^n| = |\Delta^\circ(\gamma) \cap \Z^n|. 
\end{equation}
(Here $C^\circ$ is the interior of the cone $C$.)

The main observations of this note are the following (Theorem \ref{th-main}): (1) an anticanonical polytope for a family $\Delta$ (if it exists) is unique. (2) Extending the case of toric varieties, the anticanonical polytope is represented (in the polytope algebra) by the sum of rays in the normal fan to the family.

In Section \ref{sec-examples} we will use uniqueness of the anticanonical polytope to recover, in a simple purely combinatorial way, formulae for the anticanonical classes of (smooth) toric varieties, complete flag variety as well as (smooth) group compactifications. %Nevertheless, our approach can be applied to other reductive groups as well. %as normal Cohen-Macaulay varieties.  

Our approach might be useful in the study of Fano varieties and in fact we suggest a convex geometric notion of a Fano variety. Namely, we call a linear family of polytopes a {\it Fano family} when the anticanonical element is represented by a polytope in the family, as opposed to just a (virtual) polytope in the group generated by the family (see Remark \ref{rem-Fano-var}).

\begin{REM}
In \cite{KST}, to each Schubert variety (in type A) the authors correspond a collection of certain faces of a Gelfand-Zetlin polytope.   
One knows that the anticanonical class of the flag variety is twice the sum of Schubert classes of codimension $1$. Our description of the anticanonical class of the flag variety as the sum of all the facets of a Gelfand-Zetlin polytope agrees with that of \cite{KST}.
\end{REM}

\begin{REM}  %\label{rem-intro-normal-CM}
We recall that the notion of canonical class of a smooth projective variety extends to normal varieties. 
In fact if $X$ is normal and Cohen-Macaulay, the equation \eqref{equ-intro-linebundle} still holds (see \cite[Chapter III, Corollary 7.7]{Hartshorne}). In this case, the anticanonical class is in general a Weil divisor. Thus one should be able to use our approach provided that there is a linear family $\Delta: C \to \P_n(\Q)$ where $C$ is a full dimensional cone in the $G$-linearized divisor class group of the variety. This is the case, for example, for spherical varieties and Schubert varieties (in particular spherical varieties and Schubert varieties are Cohen-Macaulay).
\end{REM}

%\medskip

{\bf Ackknowledgement:} We would like to thank Valentina Kiritchenko for useful discussions and suggestions. In particular, she suggested looking at an analogue of the notion of a Fano variety in the context of linear families of polytopes. 

%{\bf Mention that formulae for anticanonical class of wonderful compactifications, group compactifications and in general spherical varieties are known, cite Hofscheier and Rittator and Kiritchenko(?).}

\section{Linear families of convex polytopes and anticanonical polytope}   \label{sec-lin-family-polytope}
\subsection{Preliminaries about polytopes}   \label{subsec-prelim-polytopes}
Let $\P_n$ denote the collection of all convex polytopes in $\R^n$. The set $\P_n$ is closed under Minkowski addition and multiplication by positive scalars.  One knows that the Minkowski addition in $\P_n$ is cancellative, i.e. if for $P_1, P_2, P \in \P_n$ we have $P_1 + P = P_2 + P$ then $P_1 = P_2$. It follows that $\P_n$ can be formally extended to a vector space $\V_n$. The elements of $\V_n$ are formal differences $P_1 - P_2$ for all $P_1, P_2 \in \P_n$. We say that $P_1 - P_2 = P_1' - P_2'$ if $P_1 + P_2' = P_1' + P_2$. The elements of $\V_n$ are usually called {\it virtual polytopes}. 

Let $\vol_n$ denote the ($n$-dimnesional) volume function on $\P_n$. That is, $\vol_n$ assigns to a polytope $P$ its $n$-dimensional volume $\vol_n(P)$. One knows that $\vol_n$ is a homogeneous polynomial of degree $n$ on $\P_n$. This means that for any finite collection of polytopes $P_1, \ldots, P_s \in \P$ the function $F: \R^s \to \R$ defined by:
$$F(c_1, \ldots, c_s) = \vol_n(c_1 P_1 + \cdots + c_s P_s),$$
is a homogenous polynomial of degree $n$ (possibly $0$). 
One then sees that the function $\vol_n$ extends uniquely to a homogeneous polynomial of degree $n$ on the vector space $\V_n$. We denote this extension again by $\vol_n$.

If $R \subset \R$ is a subring, we let $\P_n(R)$ denote the collection of all convex polytopes in $\R^n$ with vertices in $R^n$. We often take $R$ to be $\Q$ or $\Z$. We also let $\V_n(R)$ denote the group generated by $\P_n(R)$. 

For a convex set $P \subset \R^n$ we let $N(P)$ be the number of lattice points in $P$, that is, $N(P) = |P \cap \Z^n|$. The function $N$ restricted to the collection of lattice polytopes $\P_n(\Z)$ is a polynomial function (\cite{KhP}). In fact, for any integer $k > 0$ the function $N$ is a quasi-polynomial on $\P_n(\frac{1}{k}\Z)$. Using this one can extend $N$ to the whole $\Q$-vector space $\V_n(\Q)$. The function $N$ is usually known as the {\it Ehrhart function}.

For a polytope $P \subset \R^n$, its {\it support function} $\varphi_P$ is a function from $\R^n$ to $\R$ defined by:
$$\varphi_P(\xi) = \min\{ \langle x, \xi \rangle \mid x \in P\}.$$
One sees that $\varphi_P$ is a piecewise linear map on $\R^n$.

Recall that a {\it fan} $\Sigma$ in $\R^n$ is a finite collection of strictly convex polyhedral cones that intersect at their common faces. A fan $\Sigma$ is complete if the union of cones in $\Sigma$ is the whole $\R^n$. When the cones in $\Sigma$ are generated by rational vectors we call it a rational fan. We will only deal with rational fans. A fan $\Sigma$ is {\it simplicial} if every cone in the fan is simplicial. Finally $\Sigma$ is a {\it smooth} fan if it is rational simplicial and for each cone in $\Sigma$ the corresponding primitive vectors form part of a $\Z$-basis for $\Z^n$. Smooth fans correspond to smooth toric varieties. 

For a polytope $P \subset \R^n$ the {\it normal fan} of $P$ is a complete fan $\Sigma_P$ in $\R^n$ whose cones are in one-to-one correspondence with the faces of $P$. For a face $F$ of $P$ the corresponding cone $\sigma_F \in \Sigma_P$ is defined as the collection of all vectors $\xi \in \R^n$ such that the dot product $\langle \xi, \cdot \rangle$, regarded as a function on $P$, attains its minimum on the face $F$. Thus, the support function $\varphi_P$ restricted to each cone in the normal fan is linear. 

{Conversely, we say that a polytope $P$ is 
{\it normal} to a fan $\Sigma$ if the support function $\varphi_P$ restricted to each cone in the fan $\Sigma$ is linear. We point out that a given polytope can be normal to different fans. In fact, if $P$ is normal to a fan $\Sigma$ then this fan is a refinement of the normal fan of $P$.}

The collection of all cones that are normal to a given fan $\Sigma$ is closed under Minkowski addition and multiplication by positive scalars. We denote this set by $\P_\Sigma$. Similarly, for a subring $R \subset \R$ we denote the set of polytopes in $\P_\Sigma$ with vertices in $R^n$ by $\P_\Sigma(R)$. We also denote the subgroup of virtual polytopes generated by $\P_\Sigma$ (respective $\P_\Sigma(R)$) by $\V_\Sigma$ (respectively $\V_\Sigma(R)$).

Let $\Sigma(1) = \{\rho_1, \ldots, \rho_s\}$ be the set of rays in the fan $\Sigma$. For each ray $\rho_i$ let let $v_i$ be the unit vector along $\rho_i$. To each polytope $P \in \P_\Sigma$ one assigns the so-called {\it support numbers} $a_1, \ldots, a_s$ defined as follows:
$$a_i = \min\{ \langle x, v_i \rangle \mid x \in P\}.$$
In other words, $a_i$ is the value of the support function $\varphi_P$ on the unit vector $v_i$. Roughly speaking, the support numbers $a_i$ measure how far the corresponding facets of the polytope $P$ are from the origin.

As mentioned before, in this paper we only consider rational fans and rational (virtual) polytopes. In this setting, to define the support numbers of a polytope, it is more natural to use primitive vectors instead of unit vectors. More precisely, for each ray $\rho_i \in \Sigma(1)$ we take the vector $v_i$ to be the primitive vector along $\rho_i$, i.e. the smallest nonzero integer vector along the ray $\rho_i$. For a rational polytope $P$ we define its support numbers using this choice of the $v_i$. 

\begin{Rem}   \label{rem-supp-numbers-virtual-polytope}
One can show that the map that assigns to a polytope $P$ its support numbers $(a_1, \ldots, a_s)$ extends to a linear map from the vector space of virtual polytopes $\V_\Sigma$ to $\R^s$. We denote the virtual polytope with the support numbers $(a_1, \ldots, a_s)$ by 
$P(a_1, \ldots, a_s)$. One also can observe that when $\Sigma$ is a simplicial fan then this map is in fact surjective and hence gives an isomorphism of vector spaces $\V_\Sigma \cong \R^s$. Moreover, if $\Sigma$ is a rational simplicial fan, this restricts to an isomorphism $\V_\Sigma(\Q) \cong \Q^s$. Also if $\Sigma$ is a smooth fan then we have $\V_\Sigma(\Z) \cong \Z^s$.
This isomorphism can be interpreted as follows: let $(c_1, \ldots, c_s) \in \Z^s$ be an $s$-tuple of integers. Then there exist lattice polytopes $P$, $P'$ with support numbers $(a_1, \ldots, a_s)$, $(a'_1, \ldots, a'_s)$ respectively such that $(c_1, \ldots, c_s) = (a_1-a'_1, \ldots, a_s-a'_s)$.

Clearly $\P_\Sigma$ depends on the fan $\Sigma$. But we note that when $\Sigma$ is simplicial, the vector space $\V_\Sigma$ only depends on the set $\Sigma(1)$ of rays in $\Sigma$. That is, any polytope whose collection of facet normals is $\Sigma(1)$ belongs to the vector space $\V_\Sigma$.
\end{Rem}

\subsection{Linear families of polytopes}
By a linear family of polytopes we mean a subset of $\P_n(\Q)$ which is closed under Minkowski addition and multiplication by positive rational scalars. More precisely, we make the following definition:
\begin{Def}[Linear family of polytopes]    \label{def-linear-family-polytope}
Let $C \subset V$ be a full dimensional convex polyhedral cone in a finite dimensional $\Q$-vector space $V$. %({\bf do we need to assume $C$ is strictly convex?})
We call an {$\Q$-linear map} $\Delta: C \to \P_n(\Q)$ a {\it linear family of polytopes}. That is, for $\lambda_1, \lambda_2 \in C$ and $c_1, c_2 \in \Q_{\geq 0}$ we have:
$$\Delta(c_1\lambda_1 + c_2\lambda_2) = c_1\Delta(\lambda_1) + c_2\Delta(\lambda_2),$$ where the addition on the righthand side is Minkowski addition. We usually assume that general elements of the family have maximal dimension, i.e. if $\lambda$ lies in the interior of the cone $C$ then $\dim(\Delta(\lambda)) = n$.  
\end{Def}

We make the following important observation.
\begin{Prop} \label{prop-normal-fan-family}
Let $\Delta: C \to \P$ be a linear family of polytopes. Then for all $\lambda \in C^\circ$, the interior of $C$, the polytopes $\Delta(\lambda)$ have the same facet normals.
%Then there is a fan $\Sigma$ such that for all $\lambda \in C$ the support function $\varphi_{\Delta(\lambda)}$ is linear on each cone of $\Sigma$.
%Then there is a fan $\Sigma$ in $\R^n$ which is the normal fan for all the polytopes $\Delta(\lambda)$, where $\lambda$ lies in the interior $C^\circ$ of the cone $C$.
\end{Prop}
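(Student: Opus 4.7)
The plan is to show that for any two points $\lambda_1, \lambda_2 \in C^\circ$ the polytopes $\Delta(\lambda_1)$ and $\Delta(\lambda_2)$ have the same facet normals; the two inclusions then follow by symmetry. The central ingredient is the following convex-geometric observation: if $P$ is an $n$-dimensional convex polytope and $Q$ is any convex polytope in $\R^n$, then every facet normal of $P$ is also a facet normal of $P+Q$. Indeed, if $F$ is the facet of $P$ with outer normal $v$ and $Q_v$ is the face of $Q$ extremal in direction $v$, then $F + Q_v$ is the face of $P+Q$ extremal in direction $v$; since both $F$ and $Q_v$ lie in hyperplanes perpendicular to $v$, so does $F+Q_v$, giving $\dim(F+Q_v)\le n-1$, while containing a translate of $F$ forces $\dim(F+Q_v)=n-1$.

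Given this, I would proceed as follows. The cone $C$ is full dimensional, so $C^\circ$ is open in $V$. Hence for any $\lambda_1, \lambda_2 \in C^\circ$ there exists a positive rational $\epsilon$ with $\lambda_2 - \epsilon\lambda_1 \in C$, and $\Q$-linearity of $\Delta$ yields
\[
\Delta(\lambda_2) \;=\; \epsilon\,\Delta(\lambda_1) \;+\; \Delta(\lambda_2 - \epsilon\lambda_1).
\]
Since $\lambda_1 \in C^\circ$, the maximal-dimensionality assumption in Definition \ref{def-linear-family-polytope} ensures $\epsilon\Delta(\lambda_1)$ is full dimensional with the same facet normals as $\Delta(\lambda_1)$. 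Applying the observation above with $P = \epsilon\Delta(\lambda_1)$ and $Q = \Delta(\lambda_2 - \epsilon\lambda_1)$ shows that every facet normal of $\Delta(\lambda_1)$ is a facet normal of $\Delta(\lambda_2)$. Swapping the roles of $\lambda_1$ and $\lambda_2$ gives the reverse inclusion, so the two sets of facet normals coincide.

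I do not anticipate a serious obstacle. The one step requiring care is the Minkowski-addition fact with a possibly lower-dimensional second summand: the clean textbook statement that the normal fan of a sum is the common refinement of the summand normal fans strictly requires both summands to be full dimensional, and extra rays can appear on the $P+Q$ side. Fortunately we only need the one direction ``facet normals of $P$ appear among those of $P+Q$,'' which is handled by the dimension count sketched above. Everything else is an immediate consequence of $\Q$-linearity of $\Delta$ together with openness of $C^\circ$ in $V$.
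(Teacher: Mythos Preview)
Your proof is correct. Both your argument and the paper's hinge on the same convex-geometric fact---that the set of facet normals of a Minkowski sum $\sum c_iP_i$ with all $c_i>0$ is independent of the scalars---but you organize it differently. The paper fixes extremal generators $v_1,\ldots,v_s$ of $C$, writes any $\gamma\in C^\circ$ as $\sum c_iv_i$ with all $c_i>0$, and compares $\Delta(\gamma)=\sum c_i\Delta(v_i)$ to the single reference polytope $\Delta(v_1+\cdots+v_s)$; you instead compare two arbitrary interior points directly via the two-term decomposition $\Delta(\lambda_2)=\epsilon\Delta(\lambda_1)+\Delta(\lambda_2-\epsilon\lambda_1)$.

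Your route buys two things: it avoids any appeal to a generating set for the cone, and it is explicit about exactly which direction of the Minkowski-sum/normal-fan relationship is needed (and why full-dimensionality of only one summand suffices), a point the paper's proof glosses over. The paper's route is marginally shorter once one accepts the standard fact that facets of a Minkowski sum decompose uniquely as sums of faces of the summands. Either argument is fine.
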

\begin{proof}
{Let $v_1, \ldots, v_s$ be extremal vectors in $C$ that generate $C$ as a cone. For each $i$, let $P_i = \Delta(v_i)$. Take $\gamma \in C^\circ$. We can write $\gamma = \sum_{i=1}^s c_i v_i$ where $c_i > 0$ for all $i$. One knows that each facet of $\Delta(\gamma)$ is a unique sum of faces of the $P_i$. But $\Delta(\gamma) = \sum_i c_iP_i$ and $P_i$ and $c_iP_i$ have the same normal fan. It follows that the facets of $\Delta(v_1 + \cdots + v_s)$ and $\Delta(\gamma)$ are parallel to each other and thus have the same normals.}
\end{proof}

The next proposition shows that a linear projection of a polyhedral cone into a vector space gives us a piecewise linear family. 
\begin{Prop}   \label{prop-family-piecewise-lin}
Suppose $\tilde{C}$ is a full dimensional polyhedral cone in a finite dimensional $\Q$-vector space $\tilde{V}$. Let $\pi: \tilde{V} \to V$ be a linear map onto another $\Q$-vector space $V$. Let $C = \pi(\tilde{C})$ be the polyhedral cone obtained by projecting $\tilde{C}$ into $V$. %({\bf do we need to assume $C$ is strictly convex?}).
Define a family $\Delta: C \to \P_n(\Q)$ by $\Delta(\gamma) = \pi^{-1}(\gamma) \cap \tilde{C}$. Then $\Delta$ is a piecewise linear family, in the following sense: There is a fan $\Sigma$ supported on $C$ such that for any cone $\sigma \in \Sigma$, the map $\Delta$ is linear on $\sigma$.
\end{Prop}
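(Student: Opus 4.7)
The plan is to realize each fiber polytope $\Delta(\gamma)$ as the solution set of a system of linear inequalities in a fixed vector space whose right-hand sides depend linearly on $\gamma$, and then obtain $\Sigma$ as the chamber complex that records the combinatorial type of this solution set.

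First, I would fix a description of $\tilde{C}$ by its facet inequalities $\tilde{C} = \{x \in \tilde{V} : \langle a_i, x \rangle \geq 0,\ i = 1, \ldots, m\}$ and choose a $\Q$-linear section $s : V \to \tilde{V}$ of $\pi$. Setting $K = \ker(\pi)$ and identifying the fiber $\pi^{-1}(\gamma)$ with $K$ via $y \mapsto s(\gamma) + y$, the polytope $\Delta(\gamma)$ is transported to
$$\{y \in K : \langle b_i, y \rangle \geq h_i(\gamma),\ i = 1, \ldots, m\} \subset K,$$
where $b_i = a_i|_K$ is independent of $\gamma$ and $h_i(\gamma) = -\langle a_i, s(\gamma) \rangle$ is linear in $\gamma$. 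So every polytope in the family has facet normals drawn from the fixed finite list $\{b_1, \ldots, b_m\}$, with support numbers varying linearly with $\gamma$.

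Next, I would construct $\Sigma$ as the chamber complex of this parametric linear program. For each subset $I \subseteq \{1, \ldots, m\}$ of size $\dim K$ for which $\{b_i\}_{i \in I}$ is a basis of $K^*$, the system $\langle b_i, y \rangle = h_i(\gamma)$, $i \in I$, has a unique solution $v_I(\gamma)$, linear in $\gamma$. The set
$$\sigma_I = \{\gamma \in C : \langle b_j, v_I(\gamma) \rangle \geq h_j(\gamma)\ \text{for all}\ j = 1, \ldots, m\}$$
is a rational polyhedral cone in $V$, equal to the closure of the locus where $v_I(\gamma)$ is a vertex of $\Delta(\gamma)$. These cones generally overlap in their interiors, but their coarsest common refinement is a fan $\Sigma$ supported on $C$ with the property that on the interior of each maximal cone $\sigma$ a fixed collection $\mathcal{I}_\sigma = \{I : \sigma \subseteq \sigma_I\}$ indexes the vertices of $\Delta(\gamma)$, and in particular $\Delta(\gamma)$ has constant combinatorial type there.

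On each maximal cone $\sigma \in \Sigma$, the vertices of $\Delta(\gamma)$ are $\{v_I(\gamma) : I \in \mathcal{I}_\sigma\}$, and since each $v_I$ is linear in $\gamma$ and all these polytopes share a common normal fan, the standard identity for Minkowski sums of polytopes with coinciding normal fans (vertices add cone-by-cone) yields $\Delta(c_1 \gamma_1 + c_2 \gamma_2) = c_1 \Delta(\gamma_1) + c_2 \Delta(\gamma_2)$ for $\gamma_1, \gamma_2 \in \sigma$ and $c_1, c_2 \geq 0$. The main obstacle is the chamber-complex bookkeeping: verifying that the common refinement of the $\{\sigma_I\}$ actually forms a fan supported on all of $C$, and that the combinatorial-type partition is genuinely indexed by its maximal cones. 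This is standard but technical polyhedral geometry, essentially the construction of a secondary-type fan attached to the arrangement $\{b_i\}$.
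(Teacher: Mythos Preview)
Your proposal is correct and follows a route that is close in spirit to the paper's, but with a different organizing principle for the fan $\Sigma$.

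The paper also first isolates an additivity lemma (its Lemma~\ref{lem-family-piecewise-add}): if all $\Delta(a)$ for $a$ in a cone $S$ share the same normal fan, then $a \mapsto \Delta(a)$ is additive on $S$. The paper proves this via support functions rather than vertices, but the content is the same as your ``vertices add cone-by-cone'' step. Where you differ is in how $\Sigma$ is built. The paper defines, for each $\gamma \in C$, the cone $\sigma_\gamma = \overline{\bigcap_\tau \pi(\tau^\circ)}$, intersecting over faces $\tau$ of $\tilde{C}$ with $\gamma \in \pi(\tau^\circ)$, and asserts that these form a fan on which the normal fan of $\Delta(\gamma')$ is constant. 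You instead fix a section, realize each fiber by a parametric system $\langle b_i, y\rangle \ge h_i(\gamma)$, and take $\Sigma$ to be the chamber complex recording which basis subsets $I$ yield actual vertices. These are dual descriptions of the same secondary-fan structure: the paper tracks which faces of $\tilde{C}$ meet the fiber, you track which vertex-bases are feasible. Your version is more explicit and computational; the paper's is coordinate-free but correspondingly terser (it, too, leaves ``one verifies that the collection of the cones \ldots\ is a fan'' unproved). Both rely on the same acknowledged gap---that the chamber decomposition really is a polyhedral fan supported on $C$---and both close with the same additivity argument on each chamber.
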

\begin{proof}
First we prove a lemma.
\begin{Lem}[Sufficient condition for additivity]   \label{lem-family-piecewise-add}
Fix a fan $\Sigma$ with rays $\Sigma(1) = \{\rho_1, \ldots, \rho_m\}$ and the corresponding primitive vectors $\{v_1, \ldots, v_m\}$. For $a=(a_1, \ldots, a_m)$ let $\Delta(a) = \{x \in \R^n \ | \ \langle x, v_i \rangle \geq -a_i, 1\leq i \leq m \}$. Suppose we have
a convex cone $S \subset \R^m$ such that for all $a \in S$ the polytopes $\Delta(a)$ have the same normal fan $\Sigma$.
%and $1 \leq i \leq m$ there exists $y \in \R^n$ such that:  
%\begin{equation}\label{fan prop}
%\langle y, v_i \rangle = -a_i \textup{ and } \langle y, v_j \rangle \geq -a_j,~ \forall j \neq i.
%\end{equation}
Then the map $\Delta: S \to \P_n$, $a \mapsto \Delta(a)$, is additive, that is, $\Delta(a+b) = \Delta(a) + \Delta(b)$, for all $a, b \in S$.
\end{Lem}
\begin{proof}
%Let $a, b \in S$, then $a+b \in S$ and we consider the support functions corresponding to polytopes $\Delta(a), \Delta(b)$ and $\Delta(a+b)$. We recall that the support function $\varphi_P$ associated to a polytope $P$ is defined as $$\varphi_P(u) = \min \{ \langle m,u \rangle \ | \ m \in P\}.$$ Notice that for rays $\rho_i$ and polytope $P$ normal to $\Sigma$, we have $\varphi_P(\rho_i) = a_i$. For any other $u$, the support function is linear on cones of $\Sigma$, so $\varphi_P$ is determined by its values on the rays $\{\rho_i\}.$
By assumption for each $a \in S$ the $v_i$ are rays in the normal fan of $\Delta(a)$. It follows that the support function $\varphi_{\Delta(a)}$ is determined by its values on the $v_i$. Now take $a, b \in S$. We have $\varphi_{\Delta(a)}(v_i) = a_i$, $\varphi_{\Delta(b)}(v_i)=b_i$ and $\varphi_{\Delta(a+b)}(v_i) = a_i+b_i$, for all $i$. Because $\varphi_{\Delta(a)}$, $\varphi_{\Delta(b)}$ and $\varphi_{\Delta(a+b)}$ are all linear on each cone in $\Sigma$ this then implies that $ \varphi_{\Delta(a+b)} = \varphi_{\Delta(a)} + \varphi_{\Delta(b)}$, and hence $\Delta(a+b) = \Delta(a) + \Delta(b)$ as required.
\end{proof}

%We can write $\tilde{V} = V \oplus W$, for some vector space $W$ so that $\pi$ is the projection on $V$ along $W$. Let $\{\tilde{F}_1, \ldots, \tilde{F}_s\}$ be the facets of the cone $\tilde{C}$. For each $i=1, \ldots, s$ let $\tilde{v}_i$ denote the primitive inward normal to the facet $\tilde{F}_i$ and write $\tilde{v}_i = \psi_i + v_i$ with $\psi_i \in V$ and $v_i \in W$. We define a fan $\Sigma$ supported on $C$ such that $\Delta$ is additive restricted to each cone in $\Sigma$. 
For each $\gamma \in C$ let
$$\sigma_\gamma = \overline{\bigcap_\tau \pi(\tau^\circ)},$$ where the intersection is over all the faces $\tau$ of $\tilde{C}$ such that $\gamma \in \pi(\tau^\circ)$. One verifies that the collection of the cones $\{\sigma_\gamma \mid \gamma \in C\}$ is a fan with support $C$. But from the construction one sees that for all $\gamma' \in \sigma_\gamma^\circ$ the polytopes $\Delta(\gamma')$ have the same normal fan. The claim now follows from Lemma \ref{lem-family-piecewise-add}. 
%to show additivity of $\Delta$ on each cone in $\Sigma$. 
%Take $c \in C$ and without loss of generality let $\tilde{F}_1, \ldots \tilde{F}_m$, $m \leq s$, be the facets of $\tilde{C}$ such that $c \in \pi(\tilde{F}_i)$ for $i=1, \ldots, m$. Now define the cone $S_c \subset \R^m$ as the image of $\sigma_c$ under the linear map $C \to \R^m$ given by $x \mapsto (\langle x, \psi_i\rangle \mid i=1, \ldots, m)$. We show that the cone $S_c$ satisfies the property \eqref{fan prop}. First we note that 
%a facet $\tilde{F}_i$ is the set of solutions $z$ to the system:
%\begin{equation} \label{equ-tilde-F_i}
%\langle z, \tilde{v}_i \rangle = 0 \textup{ and } \langle z, \tilde{v}_j \rangle \geq 0,~ j \neq i.
%\end{equation}
%Let $a=(a_1, \ldots, a_m) \in S$ and pick $x \in \sigma_c$ such that $a_i = \langle x, \psi_i\rangle$, $i=1, \ldots, m$. The fact that $x \in \sigma_c$ then implies that for each $1 \leq i \leq m$ there is $\tilde{y} \in \tilde{F}_i$ such that $\langle \tilde{y}, \tilde{v}_i \rangle =0$ and $\langle \tilde{y}, \tilde{v}_j \rangle \geq 0$ for $j \neq i$. If we write $\tilde{y} = y + x$ then from \eqref{equ-tilde-F_i} we see that:  
%$$\langle y, v_i \rangle = -a_i \textup{ and } \langle y, v_j \rangle \geq -a_j,~j \neq i.$$ 
%This finishes the proof.
\end{proof}

Fix a lattice $\Gamma \subset V$. We are interested in the number of lattice points $N(\Delta(\gamma))$ for $\gamma \in \Gamma$.  
In analogy with the anticanonical class of a compact complex manifold we make the following definition (see \eqref{equ-intro-antican-polytope}):
\begin{Def}[Anticanonical polytope in a family]   \label{def-anticanonical-polytope}
With notation as before, let $\Delta: C \to \P_n(\Q)$ be a linear family of polytopes. We say that $\Delta(\kappa)$, for $\kappa \in \Gamma$, is an {\it anticanonical (virtual) polytope} in the family $\Delta$ with respect to a lattice $\Gamma$, if for all $\gamma \in C^\circ \cap \Gamma$ such that $\gamma - \kappa \in C \cap \Gamma$ we have:
\begin{equation}  \label{equ-main-antican}
N(\Delta(\gamma - \kappa)) = N(\Delta^\circ(\gamma)).
\end{equation}
That is, the number of lattice points in $\Delta(\gamma - \kappa)$ is equal to the number of lattice points in the relative interior of $\Delta(\gamma)$.
\end{Def}

\begin{Rem}
In Definition \ref{def-anticanonical-polytope} it is important to assume that $\kappa$ is in the lattice $\Gamma$, otherwise there are infinitely many possibilities to choose a small enough rational vector $\kappa$ for which the equality \eqref{equ-main-antican} holds.
\end{Rem}

The following is immediate from the definition.
\begin{Prop} \label{prop-antican-single-lattice-point}
With notation as above, let $\Delta(\kappa)$ be an anticanonical polytope for a family $\Delta$ and assume $\kappa \in C^\circ$. Then $\Delta(\kappa)$ contains a single lattice point in its interior. %(note that the number of lattice points in the interior is defined for virtual polytopes as well, see Section \ref{subsec-prelim-polytopes}). 
\end{Prop}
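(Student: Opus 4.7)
The plan is to apply the defining equation \eqref{equ-main-antican} with the single distinguished choice $\gamma = \kappa$. Since the hypothesis is $\kappa \in C^\circ$ and $\kappa \in \Gamma$ (this is part of what it means for $\Delta(\kappa)$ to be an anticanonical polytope with respect to $\Gamma$), we have $\gamma = \kappa \in C^\circ \cap \Gamma$ automatically. Furthermore $\gamma - \kappa = 0$, which lies in $C$ because $C$ is a cone, and in $\Gamma$ because $\Gamma$ is a lattice. So this choice of $\gamma$ is admissible in Definition \ref{def-anticanonical-polytope}.

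Substituting into \eqref{equ-main-antican} then yields
\begin{equation*}
N(\Delta(0)) = N(\Delta^\circ(\kappa)).
\end{equation*}
The second key ingredient is that $\Delta(0) = \{0\}$, which follows from the $\Q$-linearity of $\Delta$ (Definition \ref{def-linear-family-polytope}): taking $c_1 = c_2 = 0$ and any $\lambda_1 \in C$ in the linearity property, one gets $\Delta(0) = 0 \cdot \Delta(\lambda_1)$, which in the monoid $\P_n(\Q)$ under Minkowski addition is the single point $\{0\}$ (the additive identity). Hence $N(\Delta(0)) = |\{0\} \cap \Z^n| = 1$.

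Combining these two facts gives $N(\Delta^\circ(\kappa)) = 1$, which is exactly the desired conclusion: $\Delta(\kappa)$ has a single lattice point in its relative interior. There is no serious obstacle here; the proposition is essentially an immediate consequence of Definition \ref{def-anticanonical-polytope} specialized at the smallest admissible value of $\gamma$, together with the fact that the linear family sends the vertex of the cone to the one-point polytope. The only thing worth being careful about is checking that the choice $\gamma = \kappa$ is legitimate (i.e.\ that both $\gamma$ and $\gamma - \kappa$ satisfy the required membership conditions), which is where the hypothesis $\kappa \in C^\circ$ is used.
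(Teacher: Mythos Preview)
Your proof is correct and follows essentially the same approach as the paper: apply the defining equation \eqref{equ-main-antican} with $\gamma = \kappa$, use linearity to see $\Delta(0)=\{0\}$, and conclude $N(\Delta^\circ(\kappa))=1$. The only difference is that you spell out in detail why $\gamma=\kappa$ is an admissible choice, which the paper leaves implicit.
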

\begin{proof}
By linearity of $\Delta$ we have $\Delta(0) = \{0\}$. By \eqref{equ-main-antican} applied to $\gamma = \kappa$ we have $N(\Delta^\circ(\kappa)) = N(\Delta(0)) = 1$.
\end{proof}

\begin{Rem}[A notion of a Fano family]  \label{rem-Fano-var}
We suggest a convex geometric analogue of the notion of a Fano variety. Namely we say that the family $\Delta: C \to \P_n(\Q)$ is {\it Fano} if firstly it has an anticanonical element $\Delta(\kappa)$ and secondly $\kappa \in C^\circ$. This agrees with the usual notion of a Fano variety for toric varieties if we take $C$ to be the $T$-linearized ample cone (see Section \ref{subsec-toric-var}). Also the Gelfand-Zetlin family is Fano which agrees with the fact that the flag variety is a Fano variety (see Section \ref{subsec-GZ}). %{\bf (Piecewise linear family on the ample cone of a variety, example spherical variety)}
\end{Rem}

The following simple example shows that not every linear family of polytopes has an anticanonical polytope. But for the families coming from smooth projective varieties (see the introduction), by the Serre duality, we always have an anticanonical polytope in the family (see \eqref{equ-intro-antican-linebundle2} in the introduction).

\begin{Ex}   \label{ex-family-without-canonical-polytope}
Let $V = \Q$, $C = \Q_{\geq 0}$ and $\Gamma = \Z$. The set $\P_1(\Q)$ is the collection of line segments in $\R$ with rational end points. It is easy to see that, for $\gamma \in \Z$, $\Delta(\gamma) = [0, 3\gamma]$ defines a linear family of polytopes. Suppose $\kappa$ is such that for all $\gamma \in \Z$ we have $|(0, 3\gamma) \cap \Z| = |[0, 3(\gamma - \kappa)] \cap \Z|$. This implies that $3\gamma - 2 = 3\gamma - 3\kappa$ and hence $\kappa = 2/3$ which is not an integer. So this family does not have an anticanonical polytope.
\end{Ex}

\section{Motivating examples from algebraic geometry}
\subsection{Toric varieties}   \label{subsec-toric-var}
Let $\Sigma$ be a complete rational fan in $\R^n$ and let  $X_\Sigma$ denote the corresponding toric variety. We assume $X_\Sigma$ is smooth although this assumption is in fact not necessary.

Let $\Sigma(1) = \{\rho_1, \ldots, \rho_s\}$ denote the rays in $\Sigma$. For each ray $\rho_i$ we denote the primitive vector along this ray by $v_i$. To each $\rho_i \in \Sigma(1)$ there corresponds an invariant prime divisors $D_i$ in $X_\Sigma$. The class group of $X_\Sigma$ is generated by the classes of the $D_i$. 
To an $s$-tuple $a=(a_1, \ldots, a_s)$ one corresponds the divisor $D(a) = \sum_i a_i D_{i}$. 
Consider the convex set $\Delta(a)$ defined by:
\begin{equation}   \label{equ-Delta-a}
\Delta(a) = \{ x \in \R^n \mid \langle v_i, x \rangle \geq -a_i,~ i=1, \ldots, s\}.
\end{equation}
From Proposition \ref{prop-family-piecewise-lin} it follows that there is a full dimensional rational polyhedral cone $C \subset \R^s$ such that $\Delta: C \to \P_n(\Q)$ is linear. More precisely, we can take $C$ to be the cone of all the $a$ such that the corresponding divisor $D(a)$ is basepoint free (see \cite[Chapters 4 and 6]{CLS}). It is well-known that $D(a)$ is basepoint free if and only if the piecewise linear function $\varphi_a$ on the fan $\Sigma$ defined by $\varphi_a(v_i) = a_i$ is a convex function (\cite[Theorem 6.1.7]{CLS}).
%{\bf Why the support function defined by the $a_i$ is the same as the support function of the polytope $\Delta(a_1, \ldots, a_s)$?}
Thus, $\Delta: C \to \P_n$ is a linear family of convex polytopes. 
%{\bf About class group of $X_\Sigma$ and polytopes $\Delta(a_1, \ldots, a_s)$ and polytopes normal to $\Sigma$.}

It is known that the anticanonical class $-K_{X_\Sigma}$ of $X_\Sigma$ is given by the divisor class of $\sum_{i=1}^s D_{i}$ (\cite[Theorem 8.2.3]{CLS}). In other words, it corresponds to the virtual polytope $\Delta(1, \ldots, 1)$ in the family $\Delta: C \to \P_n$. The interior of the cone $C$ of basepoint free divisors is the ample cone. Thus if $(1, \ldots, 1)$ lies in the interior of the cone $C$ then the anticanonical class is ample which means that $X_\Sigma$ is Fano.

\subsection{Flag variety of $\GL(n, \C)$ and the Gelfand-Zetlin polytopes}  \label{subsec-GZ}
The flag variety $\Fl(n)$ is the collection of all flags of linear subspaces in $\C^n$:
$$\{0\} \subsetneqq F_1 \subsetneqq \cdots \subsetneqq F_n = \C^n.$$
It can be identified with $G/B$ where $G = \GL(n, \C)$ or $\SL(n, \C)$ and $B$ is the Borel subgroup of upper triangular matrices. Each dominant weight $\lambda$ of $G$ can be represented as an increasing $n$-tuple of integers:
$$\lambda = (\lambda_1 \leq \cdots \leq \lambda_n).$$
We note that in the case of $G=\SL(n,\C)$ two sequences represent the same weight if their difference is a multiple of $(1, \ldots, 1)$.
Given a dominant weight $\lambda$, in their classic work \cite{GZ}, 
Gelfand and Zetlin construct a natural vector space basis for the irreducible representation $V_\lambda$ whose elements are parameterized with the lattice points $(x_{ij}) \in \Z^{n(n-1)/2}$ satisfying the following set of interlacing inequalities:
\begin{equation} \label{equ-GZ}
\left. \begin{matrix} \lambda_1 & \lambda_2 & \cdots & \cdots &
\cdots & \lambda_n \cr &&&&& \cr & x_{1,n-1} & x_{2,n-1} & \cdots &
\cdots & x_{n-1,n-1} \cr &&&&& \cr && x_{1,n-2} & x_{2,n-2} & \cdots
& x_{n-2,n-2} \cr &&&&& \cr &&& \cdots & \cdots & \cdots \cr &&&&&
\cr &&&& x_{1,2} & x_{2,2} \cr &&&&& \cr &&&&&x_{1,1}\cr
\end{matrix} \right.
\end{equation}
where the notation $$\left. \begin{matrix} a & b \cr &c \end{matrix}\right.$$
means $a \leq c \leq b$. The set of all points $(x_{ij})$ in $\R^{n(n-1)/2}$ satisfying \eqref{equ-GZ} is called the {\it Gelfand-Zetlin polytope} associated to $\lambda$ denoted by $\Delta_{\GZ}(\lambda)$. Thus, if $L_\lambda$ denotes the equivariant line bundle on $\Fl(n)$ associated to a dominant weight $\lambda$, we have:
$$\dim(H^0(\Fl(n), L_\lambda)) = |\Delta_\GZ(\lambda) \cap \Z^{n(n-1)/2}|.$$

One shows that 
$$\Delta: \Lambda^+ \to \P_{n(n-1)/2},~\lambda \mapsto \Delta_\GZ(\lambda),$$ is a linear family of convex polytopes. 

It is well-known that the anticanonical line bundle on the flag variety is given by the line bundle $L_{2\rho}$ (\cite[Proposition 22.8(iv)]{Brion-lec-flag}). Here $\rho$ is half the sum of positive roots (also equal to the sum of fundamental weights). As a sequence $2\rho$ is given by: 
$$2\rho = (n-1, n-3, \ldots, -(n-1)).$$

In Section \ref{subsec-antican-flag-var} we verify directly that the polytope $\Delta_\GZ(2\rho)$ satisfies the property \eqref{equ-main-antican} and hence it is in fact the anticanonical polytope in the GZ family. Thus we recover that $L_{2\rho}$ is the anticanonical class of the flag variety.

\subsection{Flag varieties, Schubert varieties and spherical varieties} \label{subsec-flag-var-Schubert-var}
Let $G$ be a connected reductive algebraic group with $B$ a Borel subgroup. Let $\Lambda^+$ denote the semigroup of dominant weights with $\Lambda^+_\R$ the positive Weyl chamber corresponding to the choice of $B$.
The variety $G/B$ is the (complete) flag variety of $G$.
To each weight $\lambda$ there corresponds a $G$-linearized line bundle $L_\lambda$. When $\lambda \in \Lambda^+$ the line bundle $L_\lambda$ is globally generated.

There are few different constructions known to assign to $L_\lambda$, $\lambda \in \Lambda^+$, a polytope $\Delta(\lambda)$ such that $N(\Delta(\lambda))$, the number of lattice points in $\Delta(\lambda)$, is equal to $\dim(H^0(G/B, L_\lambda))$. One such family of polytopes, generalizing the Gelfand-Zetlin polytopes for $\GL(n, \C)$, is the family of {\it string polytopes} (\cite{Littelmann, BZ}). String polytopes are intimately related to the so-called crystal bases of irreducible representations $V_\lambda$. For other extensions of Gelfand-Zetlin polytopes to reductive groups we refer the reader to \cite{Valya-NO-polytopes, Valya-NO-polytopes-flag}. 
Recall that a reduced decomposition $\w$, for $w_0$ the longest element in the Weyl group, is a sequence:
$$\w=(\alpha_{i_1}, \ldots, \alpha_{i_N})$$
of simple roots such that $w_0 = s_{\alpha_{i_1}} \cdots s_{\alpha_{i_N}}$. Here $N$ is the length of $w_0$ which is equal to the number of positive roots. Given a reduced decomposition $\w$ one constructs a rational convex polyhedral cone $\tilde{C}_\w \subset \Lambda^+_\R \times \R^N$, called the {\it string cone}, with the following property: let $pr_1:  \Lambda^+_\R \times \R^N \to \Lambda^+_\R$ be the projection on the first factor, then for any $\lambda \in \Lambda^+_\R$, the inverse image $\Delta_\w(\lambda) = pr_1^{-1}(\lambda)$ is a convex polytope, called a {\it string polytope}, such that the number of lattice points $N(\Delta_\w(\lambda))$ is equal to $\dim(H^0(G/B, L_\lambda))$.
From the fact that $\tilde{C}_\w$ is a convex polyhedral cone one shows the following (see \cite[Lemma 4.2]{AB}):
\begin{Lem} \label{lem-AB}
Given a reduced decomposition $\w$ there exists a fan $\Sigma_\w$ supported on the positive Weyl chamber $\Lambda^+_\R$ such that the map $\lambda \mapsto \Delta_\w(\lambda)$ is linear on each cone in the fan $\Sigma_\w$.
\end{Lem}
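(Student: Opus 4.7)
The plan is to observe that Lemma \ref{lem-AB} is essentially a direct corollary of Proposition \ref{prop-family-piecewise-lin}, specialized to the setting of the string cone. Indeed, the string cone $\tilde{C}_\w \subset \Lambda^+_\R \times \R^N$ is a full dimensional rational convex polyhedral cone, and by definition the string polytope is obtained as the fiber $\Delta_\w(\lambda) = pr_1^{-1}(\lambda) \cap \tilde{C}_\w$ of the linear projection $pr_1: \Lambda^+_\R \times \R^N \to \Lambda^+_\R$ on the first factor. This is exactly the setup of Proposition \ref{prop-family-piecewise-lin}.

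First I would verify the one hypothesis that needs checking, namely that $pr_1(\tilde{C}_\w) = \Lambda^+_\R$. This is immediate from the defining property of the string cone: for every $\lambda \in \Lambda^+$ the polytope $\Delta_\w(\lambda)$ is nonempty (it has lattice points counting $\dim H^0(G/B, L_\lambda) \geq 1$), so $\tilde{C}_\w$ surjects onto $\Lambda^+$ under $pr_1$. Since $pr_1(\tilde{C}_\w)$ is a rational convex cone containing $\Lambda^+$, it is all of $\Lambda^+_\R$ (and full dimensional).

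Next I would apply Proposition \ref{prop-family-piecewise-lin} with $\tilde{V} = \Lambda^+_\R \times \R^N$, $\tilde{C} = \tilde{C}_\w$, $V = \Lambda^+_\R$, and $\pi = pr_1$. The proposition produces a fan $\Sigma_\w$ with support $\Lambda^+_\R$ such that the assignment $\lambda \mapsto \Delta_\w(\lambda)$ is linear (additive under Minkowski sum and positively homogeneous) on each cone $\sigma \in \Sigma_\w$. Unpacking the construction in that proof, the cones of $\Sigma_\w$ arise as closures of intersections $\overline{\bigcap_\tau pr_1(\tau^\circ)}$ over the faces $\tau$ of $\tilde{C}_\w$ whose relative interior meets a given fiber; the Minkowski additivity then follows via Lemma \ref{lem-family-piecewise-add} from the fact that on each such cone the polytopes $\Delta_\w(\lambda)$ share a common normal fan.

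There is no real obstacle — this is bookkeeping rather than a new argument. The only subtle point is ensuring that the image cone $pr_1(\tilde{C}_\w)$ really does fill out the full Weyl chamber, which is what lets us state the fan as being supported on all of $\Lambda^+_\R$ rather than on some subcone; this is handled by the representation-theoretic nonemptiness of string polytopes noted above.
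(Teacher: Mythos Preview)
Your proposal is correct and matches the paper's own treatment: the paper does not give a separate proof of Lemma~\ref{lem-AB} but simply remarks that it ``is in fact a special case of Proposition~\ref{prop-family-piecewise-lin}'' (while also citing \cite[Lemma 4.2]{AB}). Your write-up just makes this specialization explicit, including the verification that $pr_1(\tilde{C}_\w) = \Lambda^+_\R$; the only cosmetic quibble is that $\tilde{V}$ should be the ambient vector space $\Lambda_\R \times \R^N$ rather than $\Lambda^+_\R \times \R^N$, but the paper itself uses the latter shorthand.
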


The above is in fact, a special case of Proposition \ref{prop-family-piecewise-lin}. Now if $C \subset \Lambda^+_\R$ is a maximal dimensional cone in the fan $\Sigma_\w$ then the map $\lambda \mapsto \Delta_\w(\lambda)$ satisfies the properties (i) and (ii) in the introduction. 

\begin{Rem}  \label{rem-string-polytope-additive}
We expect that one can always find a reduced decomposition $\w$ such that $\lambda \mapsto \Delta_\w(\lambda)$ is linear on the whole positive Weyl chmaber $\Lambda^+_\R$. 
\end{Rem}

\begin{Rem}
The above with minor modification can be applied to partial flag varieties $G/P$. Also more generally, one can define string polytopes for Schubert varieties (\cite{Caldero, Fujita}). It is well-known that Schubert varieties are Cohen-Macaulay.
\end{Rem}

\begin{Rem}[Newton-Okounkov polytopes of spherical varieties]  \label{rem-NO-polytope-spherical}
As above let $G$ be a connected reductive algebraic group. 
Recall that a normal $G$-variety is called {\it spherical} if a Borel subgroup of $G$ has a dense orbit. Spherical varieties are a generalization of toric varieties to varieties with reductive group actions. Beside toric varieties, the class of spherical varieties include (partial) flag varieties as well as group compactifications. One can associate convex polytopes to very ample $G$-linearized line bundles on projective spherical varieties (\cite{Okounkov, AB}). 
The construction of these polytopes uses string polytopes and hence depends on a choice of a reduced decomposition for the longest element $\w$. These polytopes satisfy the key property \eqref{equ-intro-dim-H^0} and are special cases of the very general construction of Newton-Okounkov bodies (\cite{Kiumars-string}). We refer to them as {\it Newton-Okounkov polytopes} for spherical varieties.   

For a given projective spherical variety and a fixed reduced decomposition, the family of Newton-Okounkov polytopes is not linear on the whole ample cone (see \cite[Example 3.2]{Kaveh-note-spherical}) but in light of Proposition \ref{prop-family-piecewise-lin} it is linear on some full dimensional rational convex cone in the ample cone. 
Thus our approach more generally applies to (smooth) spherical $G$-varieties. 

It can be shown that the assignment of polytopes to $G$-linearized line bundles extends to $G$-linearized Weil divisors on a projective spherical variety. Since spherical varieties are Cohen-Macaulay, our approach moreover can be applied to non-smooth projective spherical varieties as well.
\end{Rem}

\section{Polytope algebra}
\subsection{Algebra associated to a polynomial}
Following \cite{KhP} we define the algebra associated to a homogeneous polynomial on a vector space. We will apply this construction to the volume polynomial on the vector space spanned by a linear family of polytopes.
 
Let $V$ be a vector space over a field $\k$. Consider the algebra $\D = \D_V$ of constant coefficient differential operators on the vector space $V$. For a vector $v \in V$, let $L_v$ be the 
differentiation operator (directional derivative) on the space of polynomial functions on $V$ defined as follows. Let $h$ be a polynomial function on $V$. Then: 
\begin{equation}  \label{equ-L_v}
L_v(h)(x) = \lim_{t \to 0} \frac{h(x+tv) - h(x)}{t}.
\end{equation}
The algebra $\D$ is defined to be the commutative algebra generated by multiplication by scalars and by the derivatives $L_v$ for all $v \in V$. When $V \cong \k^n$ is finite dimensional, $\D$ can be realized as follows: Fix a basis for $V$ and let $(x_1, \ldots, x_n)$ denote the coordinate functions with respect to this basis. Each element of $\D$ is then a polynomial, with constant coefficients, in the differential operators $\partial/\partial x_1, \ldots, \partial/\partial x_n$. That is: $$\D = \{ f(\partial/\partial x_1, \ldots, \partial/\partial x_n) \mid f = \sum_{\alpha = (a_1, \ldots, a_n)} c_\alpha x_1^{a_1} \cdots x_n^{a_n} \in \k[x_1, \ldots, x_n]\}.$$
Thus, as an algebra $\D$ is naturally isomorphic to the algebra of polynomials on $V$. Let $f: V \to \k$ be a homogeneous polynomial function (of degree $n$). To $(V, f)$ we associate an algebra $A_f$ as follows. Let $I$ be the ideal of all differential operators $D \in \D$ such that $D \cdot f = 0$, i.e. those differential operators that annihilate $f$.
We call the quotient algebra $A_f = \D / I$, {\it the algebra associated to the polynomial $f$}. 

The ideal $I$ is homogeneous and hence the degree of polynomials give the algebra $A = A_f$ a natural $\Z_{\geq 0}$-grading. Let $A_i$ denote the $i$-th graded piece of $A$. One shows the following: 
\begin{itemize}
\item[(1)] $A_{0} \cong A_{n} \cong \k$ and $A_{i} = \{0\}$, for any $i>n$. 
\item[(2)] $A$ has Poincare duality, i.e. for any $0 \leq i \leq n$, the bilinear map $A_{i} \times A_{n-i} \to A_{n} \cong \k$ given by multiplication, is non-degenerate. Thus, we have $\dim_\k(A_{i}) = \dim_\k(A_{n-i})$.
\item[(3)] $A$ is generated as an algebra by its degree $1$ piece $A_1$.
\end{itemize}

Conversely, suppose $A=\bigoplus_{i=0}^n A_i$ is an algebra (over a field $\k$) with Poincare duality (in particular $A_0 = A_n \cong \k$) and also suppose $A$ is generated by $A_1$. Then $A$ is isomorphic to the algebra associated to $(V, f)$ where $V = A_1$ and $f: A_1 \to A_n \cong \k$ is the polynomial defined by $f(x) = x^n$, $\forall x \in A_1$ (see \cite[Theorem 1.1]{Kaveh-note-spherical} as well as \cite[Ex. 21.7]{Eisenbud}). 

\begin{Rem}  \label{rem-A_1-V}
We note that if $A$ is the algebra associated to $(V, f)$ then $A_1$, as a vector space, is naturally isomorphic to $V$, where the isomorphism sends $v \in V$ to the differentiation operator $L_v$ (see \eqref{equ-L_v}). 
\end{Rem}

\subsection{Polytope algebra associated to a fan}
Let $\Sigma$ be a rational fan in $\R^n$. Recall that $\P_\Sigma(\Q)$ (respectively $\V_\Sigma(\Q)$) denotes the collection of all rational convex polytopes (respectively rational virtual polytopes) that are normal to $\Sigma$ (Section \ref{subsec-prelim-polytopes}). Consider the volume polynomial $\vol_n: \V_n(\Q) \to \Q$. We denote the algebra associated to $(\V_\Sigma(\Q), {\vol_n})$ by $A_\Sigma$ (we take the field to be $\k = \Q$).

We recall that if $\Sigma$ is simplicial the vector space $\V_\Sigma(\Q)$ only depends on the set $\Sigma(1)$ of rays in $\Sigma$ (Remark \ref{rem-supp-numbers-virtual-polytope}). That is, any rational polytope whose collection of facet normals is $\Sigma(1)$ belongs to the vector space $\V_\Sigma(\Q)$. 

In \cite{KhP} it is shown that if $\Sigma$ is the fan of a smooth projective toric variety $X_\Sigma$ then $A_\Sigma$ is naturally isomorphic to the cohomology algebra $H^*(X_\Sigma, \Q)$. 

\subsection{Polytope algebra associated to a linear family of polytopes}  \label{subsec-polytope-alg}
Now let $\Delta: C \to \P_n(\Q)$ be a linear family of polytopes where $C$ is a full dimensional polyhedral cone in a $\Q$-vector space $V$. Let $\V_\Delta \subset \V_n(\Q)$ denote the $\Q$-vector subspace of virtual polytopes spanned by the image $\Delta(C)$. We denote the algebra associated to $(\V_\Delta, \vol_n)$ by $A_\Delta$. 
{The map which sends a virtual polytope $\Delta(\gamma) \in \V_\Delta$ to its corresponding linear differential operator $L_{\Delta(\gamma)}$ gives an isomorphism between $\V_\Delta$ and $(A_\Delta)_1$, the degree $1$ piece of the graded algebra $A_\Delta$ (Remark \ref{rem-A_1-V}).}

%Let $\Delta: C \to \P_n$ be a linear family of polytopes in $\R^n$. Let $\V_\Delta$ denote the span of $\Delta(C)$ in the vector space of virtual polytopes $\V_n$. Let us restrict the volume polynomial on the infinite dimensional vector space $\V_n$ to the finite dimensional subspace $\V_\Delta$ and consider the algebra $A_\Delta$ associated to $(\V_\Delta, {\vol_n}_{|\V_\Delta})$. 

\begin{Rem}(Cohomology algebras as polytope algebras)   \label{rem-polytope-alg-cohomology}
In \cite{Kaveh-note-spherical} it is observed that the polytope algebra associated to the Gelfand-Zetlin linear family of polytopes is isomorphic to the cohomology ring of the flag variety (see also Section \ref{subsec-antican-flag-var}). In \cite{KST} the authors use this description of the cohomology ring of the flag variety to obtain interesting results connecting Schubert calculus and convex polytopes. 
\end{Rem}

\begin{Rem} \label{rem-polytope-alg-coh-map}
In general if we do not assume that $X_\Sigma$ is smooth, we have a natural map $\phi: A_\Sigma \to H^*(X_\Sigma, \Q)$.
\end{Rem}

\begin{Rem} \label{rem-polytope-alg-of-family}
By Proposition \ref{prop-normal-fan-family} we know that the there is a simplicial fan $\Sigma_\Delta$ such that all the polytopes in the family $\Delta$ belong to the vector space $\V_\Sigma(\Q)$. 
%i.e. the support function is linear on each cone in $\Sigma$.{\bf define the support function of a polytope in the preliminaries section.} Consider the algebra $A_\Sigma$ associated to $(\V_\Sigma, {\vol_n}_{|\V_\Sigma})$. 
The inclusion $\V_\Delta \subset \V_{\Sigma_\Delta}(\Q)$ gives a natural embedding $(A_\Delta)_1 \hookrightarrow (A_{\Sigma_\Delta})_1$ of the degree $1$ pieces of the graded algebras $A_\Delta$ and $A_{\Sigma_\Delta}$. 
Using this embedding we can represent every element $\Delta(\gamma)$, $\gamma \in V$, of the family as a linear combination of the rays in the fan $\Sigma_\Delta$. 
\end{Rem}

\begin{Rem}   \label{rem-embedding-alg-homomorphism}
The embedding $(A_\Delta)_1 \hookrightarrow (A_{\Sigma_\Delta})_1$ induces an algebra homomorphism $A_\Delta \to A_{\Sigma_\Delta}$. In general this homomorphism is neither one-to-one nor onto (see \cite[Proposition 2.1]{KST})). 
\end{Rem}

%\begin{Rem}   \label{rem-polytope-alg-vs-coh-ring}
%{\bf The polytope algebra vs the cohomology ring.}
%As in the introduction, suppose we have a projective variety $X$ such that for some full dimensional cone $C$ in the ample cone ...
%\end{Rem}

\section{A criterion for anticanonical polytope}
As usual, let $\Delta: C \to \P_n(\Q)$ be a linear family of polytopes where $C$ is a full dimensional rational polyhedral cone in a finite dimensional $\Q$-vector space $V$. Also let $\Gamma \subset V$ be a full rank lattice. Let $\Sigma_\Delta$ be a simplicial fan such that all $\Delta(\gamma)$, $\gamma \in C^\circ$, have $\Sigma(1)$ as the set of facet normals and hence $\Delta(\gamma) \in \V_{\Sigma_\Delta}(\Q)$ (see Proposition \ref{prop-normal-fan-family} and Remark \ref{rem-supp-numbers-virtual-polytope}). 

The following is the main observation of this note.
\begin{Th}   \label{th-main}
Let $\kappa \in \Gamma$ be such that the polytope $\Delta(\kappa)$ is an anticanonical polytope for $(\Delta, \Gamma)$ (in the sense of Definition \ref{def-anticanonical-polytope}). Then in the polytope algebra $A_{\Sigma_\Delta}$, under the embedding $\V_\Delta \hookrightarrow \V_{\Sigma_\Delta}$, the polytope $\Delta(\kappa)$ is represented by a linear combination of rays in the fan $\Sigma_\Delta$ with all the coefficients equal to $1$ (see Remark \ref{rem-polytope-alg-of-family}). In particular, if the anticanonical polytope exists, it is unique.
%Let $\kappa \in \Gamma$. The following are equivalent:
%\begin{itemize}
%\item[(1)] The polytope $\Delta(\kappa)$ is an anticanonical polytope for $(\Delta, \Gamma)$ (in the sense of Definition \ref{def-anticanonical-polytope}).
%\item[(2)] In the polytope algebra, under the embedding $\V_\Delta \hookrightarrow \V_{\Sigma_\Delta}$, the polytope 
%$\Delta(\kappa)$ is represented by a linear combination of rays in the fan $\Sigma_\Delta$ with all the coefficients equal to $1$ (see Remark \ref{rem-polytope-alg-of-family}).
%\end{itemize}
%In particular, if the anticanonical polytope exists then it is unique.
\end{Th}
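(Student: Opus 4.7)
My plan is to convert the anticanonical condition of Definition~\ref{def-anticanonical-polytope} into a polynomial identity for the Ehrhart function $N$ on $\V_{\Sigma_\Delta}$, extract its top-order mixed-volume content, and identify the resulting relation as an equality in the polytope algebra $A_{\Sigma_\Delta}$.

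First I translate interior lattice points into shifted support numbers. For $\gamma \in \Gamma \cap C^\circ$ with integer support numbers $a(\gamma) = (a_1(\gamma), \ldots, a_s(\gamma))$, a lattice point $x \in \Z^n$ lies in $\Delta^\circ(\gamma)$ iff $\langle x, v_i \rangle > -a_i(\gamma)$ iff $\langle x, v_i \rangle \geq -(a_i(\gamma) - 1)$ for every $i$, equivalently iff $x$ lies in the virtual polytope $\Delta(\gamma) - R \in \V_{\Sigma_\Delta}$, where $R = \sum_{i=1}^s R_i$ is the virtual polytope with all support numbers equal to $1$. Hence $N(\Delta^\circ(\gamma)) = N(\Delta(\gamma) - R)$, and combining this with the anticanonical condition and the linearity $\Delta(\gamma - \kappa) = \Delta(\gamma) - \Delta(\kappa)$ in $\V_{\Sigma_\Delta}$ yields
\[
N(\Delta(\gamma) - \Delta(\kappa)) = N(\Delta(\gamma) - R)
\]
for a Zariski-dense set of $\gamma \in V$. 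Because $N$ extends to a polynomial on $\V_n(\Q)$ (Section~\ref{subsec-prelim-polytopes}) and both sides depend polynomially on $\gamma$, the identity holds for all $\gamma \in V$.

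Next I extract the polytope-algebra content by comparing homogeneous components of degree $n-1$ in $\gamma$. Writing $N = \vol_n + N_{n-1} + \cdots$ in homogeneous parts and using $\vol_n(\Delta(\gamma) - Y) = \sum_{j=0}^n \binom{n}{j}(-1)^j V(\Delta(\gamma)^{n-j}, Y^j)$, the $\gamma$-degree-$(n-1)$ component on each side equals $-n V(\Delta(\gamma)^{n-1}, Y)$ plus the $Y$-independent term $N_{n-1}(\Delta(\gamma))$; the latter cancels between the two sides, giving
\[
V(\Delta(\gamma)^{n-1}, \Delta(\kappa) - R) = 0 \quad \text{for every } \gamma \in V,
\]
and by polarization $V(P_1, \ldots, P_{n-1}, \Delta(\kappa) - R) = 0$ for all $P_i \in \V_\Delta$. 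Thus the degree-$1$ operator $L_{\Delta(\kappa) - R}$ annihilates $\vol_n$ on every input from $\V_\Delta$, and under the embedding $(A_\Delta)_1 \hookrightarrow (A_{\Sigma_\Delta})_1$ of Remark~\ref{rem-polytope-alg-of-family} this places $\Delta(\kappa)$ and $R$ in the same class in $(A_{\Sigma_\Delta})_1$. Hence $\Delta(\kappa)$ is represented by $\sum_i R_i$ with all coefficients equal to $1$. Uniqueness is immediate: any two anticanonical polytopes represent the same class $R$, and the injectivity of the embedding at degree $1$ (together with Remark~\ref{rem-A_1-V} for $(A_\Delta)_1$) forces them to agree as virtual polytopes in $\V_\Delta$.

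The delicate step is the last identification: upgrading the mixed-volume vanishing $V(P^{n-1}, \Delta(\kappa) - R) = 0$ detected via $P \in \V_\Delta$ into equality in $(A_{\Sigma_\Delta})_1$. Since $\V_\Delta$ is in general a proper subspace of $\V_{\Sigma_\Delta}$, this requires using the embedding of Remark~\ref{rem-polytope-alg-of-family} together with the precise description of $(A_{\Sigma_\Delta})_1$ as $\V_{\Sigma_\Delta}$ modulo the degree-$1$ annihilator of $\vol_n$ (Remark~\ref{rem-A_1-V}), and the observation that this annihilator is faithfully detected by $\V_\Delta$-pairings once the embedding is taken into account.
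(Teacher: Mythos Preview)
Your argument and the paper's both aim for the same intermediate identity, namely that $L_{\Delta(\kappa)}(\vol_n)$ and $L_R(\vol_n)$ agree on $\V_\Delta$ (the paper writes this as equation~\eqref{equ-anticanonical-sum-of-facets}, the facet-volume formula). But the routes are genuinely different. The paper computes $\partial_\kappa \vol_n(\Delta(\gamma))$ directly as an asymptotic limit: it writes the derivative as $\lim_{m\to\infty}\bigl(N(\Delta(m\gamma))-N(\Delta(m\gamma-\kappa))\bigr)/m^{n-1}$, uses the anticanonical hypothesis to identify the numerator with the number of boundary lattice points of $\Delta(m\gamma)$, and recognizes the limit as the sum of facet volumes. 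This works uniformly for rational polytopes with no integrality hypothesis on support numbers. Your route instead produces a lattice-point identity $N(\Delta(\gamma)-\Delta(\kappa))=N(\Delta(\gamma)-R)$ and extracts the degree-$(n-1)$ component.

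The technical weak point in your route is the passage through $N(\Delta^\circ(\gamma))=N(\Delta(\gamma)-R)$. That equivalence genuinely needs the support numbers $a_i(\gamma)$ to be integers; for a general linear family $\Delta:C\to\P_n(\Q)$ there is no reason this holds for $\gamma\in\Gamma$. You implicitly restrict to such $\gamma$ and then invoke polynomiality of $N$ to extend, but two things need care: (i) you must produce a full-rank sublattice of $\Gamma$ on which support numbers are integral, and (ii) even on that sublattice, $\Delta(\gamma-\kappa)$ need not be a lattice polytope (since $\kappa$ lies only in $\Gamma$), so $N$ is a priori only quasi-polynomial there and your extraction of the degree-$(n-1)$ piece is not immediate. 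These are fixable, but the paper's asymptotic argument sidesteps them entirely.

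On your flagged ``delicate step'': you are right that passing from $V(P^{n-1},\Delta(\kappa)-R)=0$ for $P\in\V_\Delta$ to equality in $(A_{\Sigma_\Delta})_1$ is not automatic, since $\V_\Delta$ may be a proper subspace of $\V_{\Sigma_\Delta}$. The paper's proof establishes exactly the same identity (equation~\eqref{equ-anticanonical-sum-of-facets}) and treats the passage to $A_{\Sigma_\Delta}$ just as informally; so this is not a defect of your argument relative to the paper. For uniqueness, both arguments reduce to the triviality of the degree-$1$ annihilator of $\vol_n$ on $\V_\Delta$, which is what Remark~\ref{rem-A_1-V} asserts.
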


\begin{Rem}
We would like to emphasize the similarity between Theorem \ref{th-main}(2) and the formula for the anticanonical class of a toric variety.
\end{Rem}

%\begin{Th}
%Under the above assumption, the image of $rho$ in the polytope algebra is given by the sum of all 
%facets with coefficient $1$.
%\end{Th}
\begin{proof}
Suppose $\kappa \in \Gamma$ is such that $\Delta(\kappa)$ is an anticanonical polytope for the family. By definition of the polytope algebra, to prove (2) we need to show that for any $\gamma \in C$ we have:
\begin{equation} \label{equ-anticanonical-sum-of-facets}
\frac{\partial}{\partial \kappa} \vol_n(\Delta(\gamma)) = \sum_i \vol_i(\Delta_i(\gamma)),
\end{equation}
where $\Delta_i(\gamma)$ denotes the $i$-th facet of the polytope $\Delta(\gamma)$ and $\vol_i$ is the $(n-1)$-dimensional volume in the affine span of $\Delta_i(\gamma)$ normalized with respect to the lattice $\Z^n$. We will use the number of lattice points and the defining equation \eqref{equ-main-antican} for an anticanonical polytope to show \eqref{equ-anticanonical-sum-of-facets}. Recall that for an $n$-dimensional polytope $P \in \R^n$ we have $\vol_n(P) = \lim_{m \to \infty} N(mP)/m^n$, where  
$N(mP)$ is the number of lattice points in the dilated polytope $mP$.
%For $\gamma \in C \cap \Gamma$ let $F(\gamma) = N(\Delta(\gamma) \cap \Z^n)$ be the number of lattice points in the rational polytope $\Delta(\gamma)$. Let $k_0 \in \N$ be such that $\Delta(k_0\gamma)$ is a lattice polytope. Then 
%by Ehrhart's theorem one knows that the function $k \mapsto F(kk_0 \gamma)$ is a polynomial of degree $d$  in $k$ where $d = \dim(\Delta(\gamma))$. Let us compute the derivative of the function $F(\gamma)$ in the direction of the vector $\kappa$. 
%From \eqref{equ-L_v} we have:
%\begin{align*}
%- \frac{\partial}{\partial \kappa} F(\gamma) &= \lim_{k \to \infty} \frac{F(\gamma) - F(\gamma - (1/k) \kappa)}{(1/k)}, \\
%&= \lim_{k \to \infty}  \frac{1}{k^{N-1}} \frac{k^N F(\gamma) - k^N(\gamma - (1/k)\kappa)}{k^N},\\
%&= \lim_{k \to \infty} \frac{\vol_n(\Delta(k\gamma)) + \textup{ lower terms in } k - \vol_n(\Delta(k\gamma - \kappa)) - \textup{ lower terms in } k}{k^{N-1}},\\
%&= \lim_{k \to \infty} \frac{\vol_n(\Delta(k \gamma)) - \vol_n(\Delta^\circ(k\gamma))}{k^{N-1}},\\
%&= \sum_i \vol_{N-1}(\Delta_i(\gamma)),\\
%\end{align*}
%where $\Delta_i(\gamma)$ denotes the $i$-th facet of $\Delta(\gamma)$.
Let $\gamma \in C^\circ \cap (\kappa + C)$ and thus $\Delta(\gamma)$ has dimension $n$. We have:
\begin{align*}
\frac{\partial}{\partial \kappa} \vol_n(\Delta(\gamma)) &=
\lim_{t \to 0} \frac{\vol_n(\Delta(\gamma-t\kappa)) - \vol_n(\Delta(\gamma))}{t}, \\
&= \lim_{t \to 0} (\lim_{m \to \infty} \frac{N(\Delta(m\gamma))}{m^n} - \lim_{m \to \infty} \frac{N(\Delta(m(\gamma - t\kappa))}{m^n}) / t, \\  
&= \lim_{m \to \infty} \frac{N(\Delta(m\gamma)) - N(\Delta(m(\gamma - \kappa/m)))}{m^{n-1}},  \quad \textup{ letting } t=1/m \\
&= \lim_{m \to \infty} \frac{N(\Delta(m\gamma)) - N(\Delta(m\gamma - \kappa))}{m^{n-1}}, \\
&= \sum_i \vol_i(\Delta_i(\gamma)),
\end{align*}
which proves the claim.
%(2) $\Rightarrow$ (1). Suppose $\Delta(\kappa)$ is represented by a linear combination of rays in the fan $\Sigma_\Delta$ with all the coefficients equal to $1$. This then implies that $\Delta(\kappa)$ is the virtual polytope associated to the anticanonical divisor of the projective toric variety $X_{\Sigma_\Delta}$ and hence $\Delta(\kappa)$ satisfies the equation:
%\begin{equation}  \label{equ-anticanonical-normal-fan}
%N(P - \Delta(\kappa)) = N(P^\circ).
%\end{equation}
%for every $P$ normal to the fan $\Sigma_\Delta$ and such that $P - \Delta(\kappa)$ is a lattice polytope as well. In particular, we see that for $\gamma \in C^\circ \cap \Gamma \cap (-\kappa + C)$, the polytope $P=\Delta(\gamma)$ satisfies \eqref{equ-anticanonical-normal-fan}. This means that $\Delta(\kappa)$ is an anticanonical polytope for the family.  
\end{proof}

\section{Examples}   \label{sec-examples}
%{\bf Specify in these examples if we are looking at all normal $X$ or only the smooth ones.}
\subsection{Anticanonical class of a projective toric variety}  \label{subsec-antican-toric-var}
We follow notation from Section \ref{subsec-toric-var}.
Let $a=(a_1, \ldots, a_s) \in \Z^s$. Then a lattice point $x = (x_1, \ldots, x_n) \in \Z^n$ satisfies the strict inequalities:
$$\langle v_i, x \rangle > -a_i,~ i=1, \ldots, s,$$
if and only if it satisfies the inequalities:
$$\langle v_i, x\rangle \geq -a_i + 1,~i=1, \ldots, s.$$
It follows immediately that, in the family $\Delta: (a_1, \ldots, a_s) \mapsto \Delta(a_1, \ldots, a_s)$, the vector $\kappa = (1, \ldots, 1)$ satisfies the property \eqref{equ-main-antican}. Thus, $\sum_i D_i$ is the anticanonical class of the toric variety $X_\Sigma$ by Theorem \ref{th-main} (cf. \cite[Theorem 8.2.3]{CLS}). 

\subsection{Anticanonical class of flag variety}   \label{subsec-antican-flag-var} 
%Theorem on $\rho$ and number of lattice points.
Recall that the Gelfand-Zetlin polytopes form a linear family of polytopes (Section \ref{subsec-flag-var-Schubert-var}).
First we make the following observation regarding the solutions of the Gelfand-Zetlin system of linear inequalities.
\begin{Lem}   \label{lem-GZ-strict}
$(x_{ij})$ is an integer solution of the system \eqref{equ-GZ} where we consider all the inequalities to be strict, if and only if $(x_{ij})$ is an integer solution of the non-strict system
\begin{equation} \label{equ-GZ-2}
\left. \begin{matrix} \lambda_1+(n-1) & \lambda_2+(n-3) & \cdots & \cdots &
\cdots & \lambda_n-(n-1) \cr &&&&& \cr & x_{1,n-1}+(n-2)& x_{2,n-1}
+(n-4)& \cdots & \cdots & x_{n-1,n-1}-(n-2) \cr &&&&& \cr &&
x_{2,n-2}+(n-3) & \cdots & \cdots & x_{n-2,n-2}-(n-3) \cr &&&&& \cr
&&& \cdots & \cdots & \cdots \cr &&&&& \cr &&&& x_{1,2}+1 &
x_{2,2}-1 \cr &&&&& \cr &&&&&x_{1,1}\cr
\end{matrix} \right.
\end{equation}
where the row $(x_{1,k}, x_{2,k}, \ldots, x_{k,k})$ is added with the vector
$$(k-1, k-3, \dots, -(k-3), -(k-1)).$$
Recall that $$\left. \begin{matrix} a & b \cr &c \end{matrix}\right.$$
means $a \geq c \geq b$.
\end{Lem}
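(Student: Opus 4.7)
The plan is to exploit the following integer identity: for $a,b,c \in \Z$, the strict inequality $a < c < b$ is equivalent to $a + 1 \leq c \leq b - 1$. Since every inequality encoded in \eqref{equ-GZ} is an interlacing of exactly this form (with $a$ the upper-left entry, $b$ the upper-right entry, and $c$ the entry sandwiched below), after the strict-to-non-strict replacement every triangle becomes a non-strict interlacing in which $a$ is shifted up by $1$, $b$ shifted down by $1$, and $c$ left alone. The content of the lemma is that these pointwise $\pm 1$ corrections can be absorbed into a single, globally consistent additive shift of the entries themselves.

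Concretely, setting $x_{j,n} := \lambda_j$, I would write the strict version of \eqref{equ-GZ} as, for each $1 \leq k \leq n-1$ and each $1 \leq j \leq k$,
$$x_{j, k+1} < x_{j, k} < x_{j+1, k+1},$$
which for integer entries is equivalent to $x_{j,k+1} + 1 \leq x_{j,k} \leq x_{j+1,k+1} - 1$. I would then introduce an (unknown) shift $s(j,k) \in \Z$ for every position of the pattern and require that $y_{j,k} := x_{j,k} + s(j,k)$ satisfy the \emph{non-strict} interlacing $y_{j,k+1} \leq y_{j,k} \leq y_{j+1,k+1}$ if and only if $(x_{j,k})$ satisfies the strict interlacing above. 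Comparing the two sides term-by-term yields the two recursions
$$s(j, k+1) - s(j, k) = 1, \qquad s(j+1, k+1) - s(j, k) = -1.$$

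Finally, I would check that these recursions are compatible (together they force $s(j+1,k+1) - s(j,k+1) = -2$, so within each row the shift decreases in steps of $2$) and, fixing the normalization $s(1,1) = 0$, are solved uniquely by the explicit formula $s(j,k) = k - 2j + 1$. This produces the row-$k$ shift vector $(k-1,\, k-3,\, \ldots,\, -(k-3),\, -(k-1))$, precisely the vector prescribed in the lemma and visible row-by-row in \eqref{equ-GZ-2}. There is no real obstacle here: the only content is to verify that the local $\pm 1$ adjustments of a strict integer interlacing pattern assemble into one globally consistent additive shift, and the formula above does exactly this.
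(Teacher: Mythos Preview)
Your argument is correct. The key observation---that for integers the strict interlacing $a<c<b$ is equivalent to the non-strict $a+1\le c\le b-1$---is exactly the one the paper uses as well, and your recursions $s(j,k+1)-s(j,k)=1$, $s(j+1,k+1)-s(j,k)=-1$ together with the normalization $s(1,1)=0$ do uniquely force $s(j,k)=k-2j+1$, giving the row vector $(k-1,k-3,\ldots,-(k-1))$ as claimed. The compatibility check you mention is genuine but easy: any path from $(1,1)$ to $(j,k)$ in the triangular index set uses $k-j$ moves of type $(j,k)\mapsto(j,k+1)$ and $j-1$ moves of type $(j,k)\mapsto(j+1,k+1)$, and the total increment $(k-j)\cdot 1+(j-1)\cdot(-1)=k-2j+1$ is independent of the order, so the system is consistent.

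The paper's proof reaches the same conclusion by induction on $n$: it checks the base case $n=2$ by hand, and for the inductive step it treats only the passage between the two top rows of the $(n+1)$-pattern, converting the strict inequalities there to non-strict ones and then adding the appropriate constants, while the remaining rows are covered by the inductive hypothesis (the row-$k$ shift $(k-1,\ldots,-(k-1))$ does not depend on $n$). Your approach is a direct, global derivation of the shift vector from the local constraints, whereas the paper verifies the given formula row by row; yours has the advantage of \emph{producing} the shift rather than merely confirming it, while the inductive argument is perhaps more self-contained for a reader who just wants to check the stated formula. Both are short and rest on the same elementary integer fact.
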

In particular, the number of integer solutions of the strict system \eqref{equ-GZ} and the number of integer solutions of the non-strict system \eqref{equ-GZ-2} are the same.
\begin{proof}
We proceed by induction. For $n=2$ we have
$$\left. \begin{matrix} \lambda_1 & \lambda_2 \cr & x_{1,1} \end{matrix}\right.$$
with strict inequalities, that is $\lambda_1 < x_{1,1} < \lambda_2$.
But $x_{1,1}$ is an integral solution of this inequalities if and
only if $$ \lambda_1+1 \leq x_{1,1} \leq \lambda_2-1$$ as desired.
Now suppose the claim is true for $n$, we wish to establish it for
$n+1$. Suppose the $(x_{ij})$ are integer solutions for the system
$n+1$. Then the strict inequalities
$$
\left. \begin{matrix}
x_{1,n}& x_{2,n}& \cdots & \cdots & x_{n,n} \cr
& x_{1,n-1} & \cdots & \cdots & x_{n-1,n-1} \cr
\end{matrix} \right.
$$
hold. That is, $ x_{1,n} < x_{1,n-1} < x_{2,n} < \cdots < x_{n-1,n-1} < x_{n,n}$. This means that:
$$x_{1,n} +1 \leq x_{1,n-1} \leq x_{2,n} -1$$
$$x_{2,n} +1 \leq x_{2,n-1} \leq x_{3,n} -1$$  
$$\cdots$$
Now add $n-2i$ to every element of the $i$-th row for $i=1,\ldots,
n$ and we get
$$x_{1,n} + (n-1) \leq x_{1,n-1} + (n-2) \leq x_{2,n} + (n-3)$$ 
$$x_{2,n} + (n-3) \leq x_{2,n-1} + (n-4) \leq x_{3,n} + (n-5)$$
$$\cdots$$
Putting these inequalities together one obtains the non-strict
inequalities
\begin{displaymath}
\left. \begin{matrix} x_{1,n} + (n-1) & x_{2,n} + (n-3) & \cdots &
\cdots & x_{n,n} -(n-1) \cr & x_{1,n-1}+(n-2) & \cdots & \cdots &
x_{n-1,n-1}-(n-2) \cr
\end{matrix} \right.
\end{displaymath}
as desired. Conversely if the $(x_{ij})$ are integer solutions of
the non-strict system (\ref{equ-GZ-2}) arguing backwards we get
that they are solutions of the strict system (\ref{equ-GZ}).
This proves the lemma.
%Now if we add the vector $(n-1, n-2, \ldots, 0)$ to the first row we
%obtain $$(\lambda_1 + 2(n-1), \lambda_2+2(n-2), \ldots,
%\lambda_{n-1}+2, \lambda_n)$$ which represents the weight $\lambda -R$. From this it easily follows that the number of integer
%solutions of the nonstrict system of inequalities
%(\ref{equ-GZ-2}) is equal to the number of integral solutions of
%the nonstrict system 
%\begin{equation*}
%\left. \begin{matrix} \lambda_1 +2(n-1)& \lambda_2+2(n-2) & \cdots &
%\cdots & \cdots & \lambda_n \cr &&&&& \cr & y_{1,n-1} & y_{2,n-1} &
%\cdots & \cdots & y_{n-1,n-1} \cr &&&&& \cr && y_{2,n-2} & y_{2,n-2}
%& \cdots & y_{n-2,n-2} \cr &&&&& \cr &&& \cdots & \cdots & \cdots
%\cr &&&&& \cr &&&& y_{1,2} & y_{2,2} \cr &&&&& \cr &&&&&y_{1,1}\cr
%\end{matrix} \right.
%\end{equation*}
%This proves the claim.
\end{proof}

%Corollary on the canonical class of flag variety. 
\begin{Cor}
The anticanonical class of the flag variety $\Fl(n)$ is given by the Chern class of the line bundle $L_{2\rho}$ where $\rho$ is the sum of fundamental weights (also equal to half of the sum of positive roots) and hence can be represented by the sequence 
$(n-1, n-3, \ldots, -(n-1))$.
\end{Cor}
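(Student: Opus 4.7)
The strategy is to combine the general uniqueness statement of Theorem \ref{th-main} with the concrete combinatorial input of Lemma \ref{lem-GZ-strict}. Since the Gelfand--Zetlin assignment $\lambda \mapsto \Delta_\GZ(\lambda)$ is a linear family on the dominant Weyl chamber $\Lambda^+$ satisfying properties (i) and (ii) of the introduction, Serre duality together with Ehrhart--Macdonald reciprocity guarantees that the (virtual) polytope attached to $-K_{\Fl(n)}$ is an anticanonical polytope for this family in the sense of Definition \ref{def-anticanonical-polytope}. By Theorem \ref{th-main} such a polytope is unique, so it suffices to exhibit \emph{one} candidate $\kappa \in \Lambda^+$ for which $\Delta_\GZ(\kappa)$ satisfies the defining identity \eqref{equ-main-antican}, and then check that $\kappa$ is represented by the sequence $(n-1, n-3, \ldots, -(n-1))$.

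The candidate is $\kappa = 2\rho$. To verify \eqref{equ-main-antican}, fix a dominant weight $\lambda$ such that $\lambda - 2\rho$ is again dominant, and consider the defining system of linear inequalities of $\Delta_\GZ(\lambda)$. The number of interior lattice points $N(\Delta_\GZ^\circ(\lambda))$ is, by definition, the number of integer tableaux $(x_{ij})$ satisfying the \emph{strict} interlacing system \eqref{equ-GZ}. Lemma \ref{lem-GZ-strict} rewrites this as the number of integer tableaux satisfying the shifted non-strict system \eqref{equ-GZ-2}. Making the affine change of variables $y_{i,k} = x_{i,k} + (k - 2i + 1)$ (which is a lattice-preserving bijection on $\Z^{n(n-1)/2}$), the shifted system \eqref{equ-GZ-2} becomes exactly the standard GZ interlacing system whose top row is $\lambda + (n-1, n-3, \ldots, -(n-1))$ shifted appropriately so as to produce the top row of $\Delta_\GZ(\lambda - 2\rho)$. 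Thus
\[
N(\Delta_\GZ^\circ(\lambda)) \;=\; N(\Delta_\GZ(\lambda - 2\rho))
\]
for every such $\lambda$, which is precisely the defining equation of an anticanonical polytope with $\kappa = 2\rho$.

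Applying Theorem \ref{th-main}, this identifies the anticanonical element of the GZ family uniquely as $\Delta_\GZ(2\rho)$, and since the assignment $L_\lambda \mapsto \Delta_\GZ(\lambda)$ intertwines the linearization on $\textup{Pic}_G(\Fl(n))$ with Minkowski addition, the anticanonical line bundle on $\Fl(n)$ is $L_{2\rho}$, with $2\rho = (n-1, n-3, \ldots, -(n-1))$ as claimed. The main bookkeeping step --- and the only place where one has to be careful --- is the translation between the tableau-level shift produced by Lemma \ref{lem-GZ-strict} and the weight-level shift by $2\rho$: one must check that, after the substitution $y_{i,k} = x_{i,k} + (k - 2i + 1)$, the interlacing inequalities between adjacent rows are preserved and the top row becomes exactly the entries of $\lambda - 2\rho$. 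This is a direct (albeit convention-sensitive) verification and involves no further geometric input.
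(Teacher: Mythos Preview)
Your proof is correct and follows essentially the same approach the paper intends: verify via Lemma \ref{lem-GZ-strict} that $\kappa=2\rho$ satisfies the defining identity \eqref{equ-main-antican} for the Gelfand--Zetlin family, invoke the uniqueness in Theorem \ref{th-main}, and then match this against the anticanonical polytope supplied by Serre duality and Ehrhart--Macdonald reciprocity. Your explicit lattice substitution $y_{i,k}=x_{i,k}+(k-2i+1)$ is exactly the step that converts the shifted system \eqref{equ-GZ-2} back into a standard Gelfand--Zetlin system, and your warning about the convention-sensitive identification of the resulting top row with $\lambda-2\rho$ is apt (the paper's own sign conventions for dominant weights versus the listed sequence for $2\rho$ require the same care).
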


\begin{Rem}  \label{rem-KST-Schubert-var-antican-class}
It is well-known that the anticanonical class of the flag variety is twice the sum of Schubert classes of codimension $1$. In \cite{KST} the authors correspond to each Schubert variety a combination of certain faces of a Gelfand-Zetlin polytope. As mentioned in the introduction, our description of the anticanonical class of the flag variety exactly agrees with this face data of Schubert varieties in \cite{KST}.
\end{Rem}

\subsection{Anticanonical class of a group compactification}  \label{subsec-antican-group-compactification}
This example basically combines the previous two examples of toric varieties and the flag variety. 

As usual let $G$ be a connected complex reductive algebraic group. Let $\pi: G \to \GL(V)$ be a finite dimensional representation of $G$. Let us consider the 
induced map $\tilde{\pi}: G \to \mathbb{P}(\End(V))$. 
We let $X_\pi$ denote the closure of $\tilde{\pi}(G)$, i.e. $X_\pi = \overline{\tilde{\pi}(G)} \subset \mathbb{P}(\End(V))$. 
We will assume that $\tilde{\pi}$ is one-to-one or in other words the representation $\pi$ is projectively faithful.

The group $G \times G$ acts on $\End(V)$ by multiplication from left and right and the variety $X_\pi$ is a spherical $(G \times G)$-variety with an open orbit isomorphic to $G \cong (G \times G) / G_{\textup{diag}}$. We will refer to $X_\pi$ as a group compactification.

We will moreover assume $X_\pi$ is smooth although this assumption can be avoided and our approach works more generally for the non-smooth case as well (see the last remark in the introduction as well as Remark \ref{rem-NO-polytope-spherical}).

The convex hull of the Weyl orbit of the highest weights of the representation $\pi$ is called the {\it weight polytope} of $\pi$. Clearly, the weight polytope is invariant under the Weyl group action. We will denote the weight polytope by $P_\pi$ and 
its intersection with the positive Weyl chamber by $P^+_\pi$.

{Conversely, let $P \subset \Lambda_\R$ be a Weyl group invariant lattice polytope, that is, its vertices lie in the weight lattice $\Lambda$. To $P$ we can associate a represenation $\pi_P$ by setting $\pi_P = \bigoplus_{\lambda \in P \cap \Lambda^+} V_\lambda$ where as before $V_\lambda$ denotes the irreducible representation with highest weight $\lambda$.}  

The following is well-known (see \cite[Proposition 8]{Timashev}, \cite{Kapranov}).
\begin{Th}
The normalization of $X_\pi$ depends only on the normal fan of $P_\pi$ (which is a Weyl group invariant fan). 
\end{Th}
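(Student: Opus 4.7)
The plan is to apply the Luna--Vust theory of spherical embeddings to $X_\pi$, viewed as a spherical $(G\times G)$-variety with open orbit $G \cong (G\times G)/G_{\textup{diag}}$.

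First, I would reduce to the torus case. Fix a maximal torus $T \subset G$ and consider the toric subvariety $Y_\pi := \overline{\tilde\pi(T)} \subset X_\pi$. Since the weights of $T$ on $\End(V)$ are exactly the differences $\lambda - \mu$ of weights of $\pi$, a direct weight-space computation in $\mathbb{P}(\End V)$ identifies the normalization $\widetilde{Y_\pi}$ with the toric variety associated to the normal fan $\Sigma_{P_\pi}$; the Weyl invariance of $P_\pi$ is what forces the ``difference polytope'' arising here to share its normal fan with $P_\pi$ itself.

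Second, I would use the Bruhat decomposition $G = \bigsqcup_w BwB$ to show $X_\pi = \overline{(G\times G)\cdot Y_\pi}$ in $\mathbb{P}(\End V)$, so that every $(G\times G)$-stable boundary divisor of $\widetilde{X_\pi}$ meets $\widetilde{Y_\pi}$ in a $T$-stable boundary divisor, and conversely the $T$-boundary divisors of $\widetilde{Y_\pi}$ assemble into $(G\times G)$-boundary divisors of $\widetilde{X_\pi}$ under $W\times W$-saturation. In the Luna--Vust framework this identifies the colored fan of $\widetilde{X_\pi}$ with the Weyl-invariant fan $\Sigma_{P_\pi}$ (with a standard collection of colors attached, which are determined by the spherical structure of $G$ itself and not by $\pi$). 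The Luna--Vust classification then yields that the normalization $\widetilde{X_\pi}$ depends only on $\Sigma_{P_\pi}$.

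The principal obstacle is the careful identification in the first step: one must track the passage from the affine $T$-weight polytope of $\End(V)$ --- naturally a difference body $P_\pi - P_\pi$ --- to the projective weight polytope governing the torus action on $Y_\pi$, and verify using the Weyl symmetry of $P_\pi$ that the resulting normal fan coincides with $\Sigma_{P_\pi}$. The second step is a standard feature of group compactifications (compare Timashev, \cite{Kapranov}): all the combinatorial data of the colored fan beyond the underlying fan is intrinsic to the spherical homogeneous space $G$ and hence independent of the choice of $\pi$.
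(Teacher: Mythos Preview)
The paper does not prove this theorem: it is quoted as a known result, with references to \cite[Proposition 8]{Timashev} and \cite{Kapranov}. Your Luna--Vust approach is essentially the one taken in those references, so in spirit you are reproducing the cited argument rather than diverging from the paper.

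One point deserves correction. In your first step you anticipate a ``difference polytope'' $P_\pi - P_\pi$ and then invoke Weyl symmetry to reconcile its normal fan with $\Sigma_{P_\pi}$. This is unnecessary and in fact would not work in general (for instance in type $A_n$, $n\geq 2$, the longest Weyl element is not $-1$, so $\Sigma_{P_\pi}$ and $\Sigma_{-P_\pi}$ need not agree). The actual situation is simpler: the identity $\mathrm{id}_V \in \End(V)$ has nonzero components only in the diagonal weight spaces $E_{\lambda,\lambda}$, so the projective torus orbit $\tilde\pi(T)\cdot[\mathrm{id}_V]$ is governed by the characters $\{\lambda : \lambda \text{ a weight of } V\}$ themselves, and the moment polytope of $Y_\pi$ is $P_\pi$ on the nose. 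No passage through a Minkowski difference is needed.

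Your second step is the standard structural argument for group compactifications: the closure $\overline{T}$ is a slice meeting every $(G\times G)$-orbit, and the colored fan of the normalization is recovered from the $T$-fan together with the fixed set of colors of the homogeneous space $(G\times G)/G_{\textup{diag}}$, which is independent of $\pi$. This is exactly the content of \cite[Proposition 8]{Timashev}.
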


Let $\Sigma$ denote the normal fan of the polytope $P_\pi$. By the above, the normalization of $X_\pi$ only depends on the normal fan $\Sigma$. We denote this normalization by $X_\Sigma$. 

We can also consider the line bundle $\L_\pi$ on $X_\pi$ obtained by restricting the line bundle $\mathcal{O}(1)$ on the projective space $\mathbb{P}(\End(V))$. By abuse of notation we denote the pull-back of the line bundle $\L_\pi$ to the normalization $X_\Sigma$ also by $\L_\pi$.

%Let $\Sigma$ be a complete fan in the Weyl chamber $\Lambda^+_\R$ and let $X_\Sigma$ denote the corresponding group compactification. 
Finally we define a polytope $\Delta(\pi)$ lying over the polytope $P^+(\pi)$ such that the number of lattice points in it is responsible for the dimension of global sections of the line bundle $\L_\pi$. This polytope is considered in \cite{KKh-Kazarnovskii} and \cite{KKh-Selecta} and is a special case of the Newton-Okounkov polytope of a spherical variety (Remark \ref{rem-NO-polytope-spherical}). For the sake of concreteness let us consider the case of $G = \GL(n, \C)$ or $\SL(n, \C)$. In this case, the polytope $\Delta(\pi)$ is a polytope fibered over $P^+(\pi)$ with the Gelfand-Zetlin polytopes as fibers:
\begin{equation}   \label{equ-Delta-pi}
\Delta(\pi) = \bigcup_{\lambda \in P^+(\pi)} (\{\lambda\} \times \Delta_{\GZ}(\lambda)) ~ \subset ~ \Lambda^+_\R \times \R^{n(n-1)/2}.
\end{equation}
For a general connected reductive group $G$ one defines $\Delta(\pi)$ using string polytopes (as in Remark \ref{rem-NO-polytope-spherical}).

\begin{Th}  \label{th-Delta-pi}
We have the following:
\begin{itemize}
\item[(1)] The dimension of space of global sections $H^0(X_\Sigma, \L_\pi)$ is equal to $N(\Delta(\pi))$, the number of lattice points in the polytope $\Delta(\pi)$. 
\item[(2)] The map $\pi \mapsto \Delta(\pi)$ is an additive map. That is, if $\pi_1, \pi_2$ are two representations then $\Delta(\pi_1 \otimes \pi_2) = \Delta(\pi_1) + \Delta(\pi_2)$.
\end{itemize}
\end{Th}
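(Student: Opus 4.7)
The plan is to prove both parts using the spherical structure of $X_\Sigma$ under $G \times G$, combined with the Peter--Weyl decomposition and the linearity of the Gelfand--Zetlin family.

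For part (1), I would first exhibit the $(G \times G)$-module decomposition of global sections. Since $X_\pi$ is a spherical $(G \times G)$-variety with open orbit $G \cong (G \times G)/G_{\textup{diag}}$, restriction to the open orbit embeds $H^0(X_\Sigma, \L_\pi) \hookrightarrow \C[G] = \bigoplus_{\mu \in \Lambda^+} V_\mu \otimes V_\mu^*$. A standard argument (via the Brion--Rittatore description of sections on spherical varieties, or by computing $U \times U$-invariants through the projective embedding into $\mathbb{P}(\End V)$) shows that the image consists exactly of the isotypic components with $\mu \in P^+(\pi) \cap \Lambda^+$, giving
\[ \dim H^0(X_\Sigma, \L_\pi) \;=\; \sum_{\mu \in P^+(\pi) \cap \Lambda^+} (\dim V_\mu)^2. \]
Then I would invoke the Gelfand--Zetlin theorem, $\dim V_\mu = |\Delta_{\GZ}(\mu) \cap \Z^{n(n-1)/2}|$, applied once for each of the two Borel factors acting on $X_\pi$: the lattice point count in the fiber of $\Delta(\pi)$ over $\mu$ is then $(\dim V_\mu)^2$. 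Summing over $\mu \in P^+(\pi) \cap \Lambda^+$ identifies $N(\Delta(\pi))$ with $\dim H^0(X_\Sigma, \L_\pi)$.

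For part (2), additivity decouples into two statements. First, moment polytopes are additive under tensor product: $P^+(\pi_1 \otimes \pi_2) = P^+(\pi_1) + P^+(\pi_2)$. This follows because the weight polytope of a tensor product is the Minkowski sum of the individual weight polytopes, and for Weyl-invariant polytopes the intersection with the positive Weyl chamber is also additive under Minkowski sum. Second, the Gelfand--Zetlin family is linear in $\lambda$ (as stated in Section \ref{subsec-GZ}): $\Delta_{\GZ}(\lambda_1 + \lambda_2) = \Delta_{\GZ}(\lambda_1) + \Delta_{\GZ}(\lambda_2)$, which is immediate from the defining inequalities \eqref{equ-GZ}. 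Combining these fiberwise yields $\Delta(\pi_1 \otimes \pi_2) = \Delta(\pi_1) + \Delta(\pi_2)$ by a direct verification of both inclusions, using that any point $(\lambda_1 + \lambda_2, x)$ with $x \in \Delta_{\GZ}(\lambda_1 + \lambda_2)$ can be written as $(\lambda_1, x_1) + (\lambda_2, x_2)$ with $x_i \in \Delta_{\GZ}(\lambda_i)$.

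The main obstacle I anticipate is pinning down the indexing in part (1): one must verify that the weights $\mu$ contributing to $H^0(X_\Sigma, \L_\pi)$ are precisely those in $P^+(\pi) \cap \Lambda^+$, and not a strict sub- or super-polytope. This requires careful analysis of how the line bundle $\L_\pi$ on $\mathbb{P}(\End V)$ restricts to $X_\Sigma$, exploiting the sphericality of the $(G \times G)$-action together with the $(G \times G)$-linearization of $\L_\pi$. For a general connected reductive $G$ beyond type A, one would replace Gelfand--Zetlin polytopes by string polytopes (Remark \ref{rem-NO-polytope-spherical}) and invoke the piecewise-linearity of the string polytope family (Lemma \ref{lem-AB}), so that the same argument carries through once one restricts to a full-dimensional cone of linearity inside the ample cone.
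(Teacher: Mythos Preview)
Your treatment of part (2) coincides with the paper's: both simply invoke the additivity of weight polytopes under tensor product together with the linearity of the Gelfand--Zetlin family. The paper does not spell out the fiberwise verification, but your sketch is the intended content of its one-line reference.

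For part (1) the paper gives no argument beyond citing Okounkov's result on Hilbert polynomials of spherical varieties, so your outline via the Peter--Weyl decomposition is genuinely more detailed. However, there is a discrepancy you should confront explicitly. Your computation $\dim H^0(X_\Sigma,\L_\pi)=\sum_{\mu\in P^+(\pi)\cap\Lambda^+}(\dim V_\mu)^2$ is correct for a $(G\times G)$-equivariant compactification of $G$ (indeed, already for $G=\GL(2)$ with the standard representation one has $X_\pi=\mathbb{P}^3$ and $\dim H^0=4=2^2$, not $2$). But the paper's displayed definition \eqref{equ-Delta-pi} places over each $\lambda$ a \emph{single} Gelfand--Zetlin polytope $\Delta_{\GZ}(\lambda)\subset\R^{n(n-1)/2}$, whose lattice points number $\dim V_\lambda$, not $(\dim V_\lambda)^2$. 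Your phrase ``applied once for each of the two Borel factors'' silently replaces that fiber by $\Delta_{\GZ}(\lambda)\times\Delta_{\GZ}(\lambda^*)$, which is what the Okounkov/Alexeev--Brion construction for the spherical $(G\times G)$-variety actually produces (and which has the correct total dimension $n+2\cdot\binom{n}{2}=n^2=\dim G$). Since the paper is marked preliminary, this looks like an omission in \eqref{equ-Delta-pi} rather than an error on your part; still, you should state clearly which fiber you are using and note that it differs from the formula as written, or else your lattice-point count will not match your section-dimension count.
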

\begin{proof}
The part (1) is the main result of \cite{Okounkov}. The part (2) is an easy corollary of additivity of the Gelfand-Zetlin polytopes and the weight polytopes. This observation goes back to \cite[Section 3 and Example 3.7]{Kaveh-note-spherical}.
\end{proof}

%\begin{Rem}  \label{rem-NO-polytope-spherical}
%More generally, one can associate convex polytopes to very ample $G$-line bundles on projective spherical varieties (\cite{Okounkov, AB}). These polytopes satisfy a generalization of Theorem \ref{th-Delta-pi}(1) and are special cases of Newton-Okounkov bodies.  
%\end{Rem}

Let us assume that the fan $\Sigma$ is smooth and let $\{\rho_1, \ldots, \rho_s\}$ denote the rays in the fan $\Sigma$. As in Section \ref{subsec-prelim-polytopes} let 
$P(a_1, \ldots, a_s) \subset \Lambda_\R$ be a Weyl invariant polytope normal to $\Sigma$ with the support numbers $a_1, \ldots, a_s$ (provided that such a polytope exists). Also let $\Delta(a_1, \ldots, a_s)$ be the polytope lying over $P^+(a_1, \ldots, a_s) = P(a_1, \ldots, a_s) \cap \Lambda^+_\R$ with the Gelfand-Zetlin polytopes as fibers, that is:
$$\Delta(a_1, \ldots, a_s) = \bigcup_{\lambda \in P^+(a_1, \ldots, a_s)} \{\lambda\} \times \Delta_{\GZ}(\lambda).$$ From the additivity of the map $\pi \mapsto \Delta(\pi)$ (Theorem \ref{th-Delta-pi}(2)) it follows that the map $(a_1, \ldots, a_s) \mapsto \Delta(a_1, \ldots, a_s)$ is additive and hence it extends to a linear function from $\Q^s$ to the vector space of virtual polytopes in $\Lambda^+_\R \times \R^{n(n-1)/2}$.
%As above assume $\Sigma$ is a simplicial fan. 
By Remark \ref{rem-supp-numbers-virtual-polytope} we know that for any $(a_1, \ldots, a_s) \in \Z^s$ we can find two lattice polytopes $P$, $P'$ normal to $\Sigma$ such that the virtual polytope $P - P'$ has support numbers $(a_1, \ldots, a_s)$. Let $\pi = \pi_P$, $\pi' = \pi_{P'}$ be the representations corresponding to the polytopes $P$, $P'$ respectively. We let $\L(a_1, \ldots, a_s)$ denote the line bundle $\L_\pi \otimes \L_{\pi'}^{-1}$. 

Now from Section \ref{subsec-antican-toric-var} and Section \ref{subsec-antican-flag-var} it follows that the virtual polytope $ \Delta(1, \ldots, 1)$ has the anticanonical property. That is, the following holds.
\begin{Prop} 
Let $\Sigma$ be a smooth Weyl invariant fan in $\Lambda_\R$. Let $(a_1, \ldots, a_s) \in \Z^s$ be such that there are representations $\pi$, $\pi'$ whose weight polytopes $P_\pi$, $P_\pi'$ have support numbers $(a_1, \ldots, a_s)$, $(a_1-1, \ldots, a_s-1)$ respectively. We then have:
$$N(\Delta(a_1-1, \ldots, a_s-1)) = N(\Delta^\circ(a_1, \ldots, a_s)).$$
\end{Prop}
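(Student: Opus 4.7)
The plan is to verify $N(\Delta(a-1))=N(\Delta^\circ(a))$ by counting lattice points on both sides using the fibered description $\Delta(a)=\bigcup_{\lambda\in P^+(a)}\{\lambda\}\times\Delta_{\GZ}(\lambda)$ from \eqref{equ-Delta-pi}, reducing strict facet inequalities of $\Delta(a)$ to the two strict-versus-nonstrict identities already established in Sections \ref{subsec-antican-toric-var} and \ref{subsec-antican-flag-var}.

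First I would rewrite the left-hand side directly from the fibered description as $N(\Delta(a-1))=\sum_{\lambda\in P(a_1-1,\ldots,a_s-1)\cap\Lambda^+}N(\Delta_{\GZ}(\lambda))$. For the right-hand side, a lattice pair $(\lambda,x)$ belongs to $\Delta^\circ(a)$ exactly when every facet inequality of $\Delta(a)$ is strict; these split into the $s$ weight-polytope inequalities $\langle v_i,\lambda\rangle\geq -a_i$ and the Gelfand-Zetlin interlacing inequalities that cut out each fiber. For integer $\lambda$, the argument of Section \ref{subsec-antican-toric-var} replaces the strict weight-polytope inequalities by $\lambda\in P(a_1-1,\ldots,a_s-1)$; for integer fiber coordinates, Lemma \ref{lem-GZ-strict} replaces the strict Gelfand-Zetlin system with top row $\lambda$ by the ordinary (non-strict) Gelfand-Zetlin system whose top row is $\lambda+2\rho$, with $2\rho=(n-1,n-3,\ldots,-(n-1))$. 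Summing fiber by fiber would then yield
\[
N(\Delta^\circ(a))=\sum_{\lambda\in P(a-1)\cap\Lambda^+}N(\Delta_{\GZ}(\lambda+2\rho)),
\]
with summands automatically vanishing at lattice points where the shifted Gelfand-Zetlin polytope degenerates.

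The final step is the reindexing $\mu=\lambda+2\rho$ to match the above with $N(\Delta(a-1))=\sum_{\mu\in P(a-1)\cap\Lambda^+}N(\Delta_{\GZ}(\mu))$. I expect this to be the main obstacle, since the translate $P(a-1)+2\rho$ is not Weyl invariant and so its set of dominant lattice points does not obviously coincide with $P(a-1)\cap\Lambda^+$. I would try to complete the match by combining Weyl invariance of $P(a-1)$ with the vanishing of $N(\Delta_{\GZ}(\mu))$ for non-dominant $\mu$ to produce a count-preserving bijection between $\Lambda^+\cap(P(a-1)+2\rho)$ and $\Lambda^+\cap P(a-1)$; an alternative route, more faithful to the introduction's motivating principles, would invoke the Ehrhart-Macdonald reciprocity together with the classical formula for the anticanonical class of a smooth group compactification (combining the toric sum of rays with the $B$-stable colors contributed by the Gelfand-Zetlin shift) to derive the identity in one stroke.
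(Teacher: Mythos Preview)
Your approach coincides with the paper's: the paper's entire argument is the single sentence preceding the proposition, namely that the claim ``follows from Section~\ref{subsec-antican-toric-var} and Section~\ref{subsec-antican-flag-var},'' and you have spelled out precisely what that combination should mean via the fibered description and Lemma~\ref{lem-GZ-strict}. So there is no methodological difference to report.

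The reindexing obstacle you isolate, however, is genuine and is not addressed by the paper's sketch. After applying the toric step to the base and Lemma~\ref{lem-GZ-strict} to the fiber one lands on
\[
N(\Delta^\circ(a))=\sum_{\lambda}N\bigl(\Delta_{\GZ}(\lambda')\bigr),
\]
with $\lambda$ ranging over dominant lattice points of $P(a-1)$ and $\lambda'$ the $2\rho$-shift of $\lambda$; matching this with $N(\Delta(a-1))=\sum_{\mu}N(\Delta_{\GZ}(\mu))$ cannot be done by a change of index, and in fact the stated equality fails already for $G=\GL(2)$ with $\Sigma$ the fan of $\mathbb{P}^1\times\mathbb{P}^1$: taking $P(a)=[-2,2]^2$ one finds $N(\Delta(a-1))=10$ while $N(\Delta^\circ(a))=1$. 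The underlying reason is exactly what your analysis exposes: the facets of $\Delta(a)$ split into weight-polytope facets and Gelfand--Zetlin facets, and replacing $a$ by $a-1$ pushes in only the former, whereas passing to the interior requires pushing in both. The anticanonical shift for this family must therefore combine the toric $(1,\dots,1)$ with the $2\rho$-shift in the fiber direction, which is not captured by the parameter $(a_1,\dots,a_s)$ alone. Your fallback of invoking the known anticanonical formula for group compactifications would be circular, so it does not close the gap either; the statement (in this preliminary version) appears to need correction rather than a cleverer proof.
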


%As in \cite{Kazarnovskii}, one can show that:
%\begin{Th}
%The moment polytope of the $(G \times G)$-algebra $\overline{\bigoplus_{m} L^m_\pi}$
%(which lives in $\Lambda_\r^+ \times \Lambda_\r^+$)
%can be identified with $P^+_\pi$.
%the convex hull of the $W$-orbit of the highest weights of the representation $\pi$
%intersected with the positive Weyl chamber $\Lambda_\r^+$. 
%To identify the moment polytope and the polytope $P^+_\pi$ we should send a point $(\lambda, \lambda^*)$ to $\lambda$. 
%Finally, the integral closure $\overline{A_{L_\pi}}$ has the same moment polytope 
%as $A_{L_\pi}$.
%\end{Th}

\begin{Cor}
The anticanonical class of a group compactification $X_\Sigma$, is given by the Chern class of the line bundle $\L(1, \ldots, 1)$. 
\end{Cor}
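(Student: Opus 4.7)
The plan is to invoke Theorem \ref{th-main} and combine it with the preceding Proposition. First I would record that the assignment $(a_1, \ldots, a_s) \mapsto \Delta(a_1, \ldots, a_s)$, extended by linearity as in the paragraph above the Proposition, is a linear family of rational virtual polytopes in $\Lambda^+_\R \times \R^{n(n-1)/2}$ parametrized by a full dimensional rational polyhedral cone $C \subset \R^s$ (the cone of support numbers of Weyl-invariant lattice polytopes normal to $\Sigma$, which is full dimensional because $\Sigma$ is smooth, by Remark \ref{rem-supp-numbers-virtual-polytope}). The preceding Proposition is the statement that $\kappa = (1, \ldots, 1) \in \Z^s$ satisfies the defining identity \eqref{equ-main-antican}, so $\Delta(1, \ldots, 1)$ is an anticanonical polytope for this family with respect to the lattice $\Z^s$.

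Next I would transport this combinatorial statement to geometry. By Theorem \ref{th-Delta-pi}(1), $\dim H^0(X_\Sigma, \L(a_1, \ldots, a_s)) = N(\Delta(a_1, \ldots, a_s))$ whenever the support numbers correspond to a globally generated line bundle, and by additivity (Theorem \ref{th-Delta-pi}(2)) the assignment $(a_1, \ldots, a_s) \mapsto \L(a_1, \ldots, a_s)$ is a homomorphism from $\Z^s$ to the $(G \times G)$-linearized Picard group of $X_\Sigma$. Since $X_\Sigma$ is assumed smooth, Serre duality together with the Ehrhart--Macdonald reciprocity (see equations \eqref{equ-intro-linebundle}--\eqref{equ-intro-antican-linebundle2}) implies that the class corresponding to $K_{X_\Sigma}^{-1}$ in this parametrization is an anticanonical element of the family in the sense of Definition \ref{def-anticanonical-polytope}. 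By the uniqueness clause of Theorem \ref{th-main}, the anticanonical polytope is unique, so this class must equal $(1, \ldots, 1)$; equivalently, $K_{X_\Sigma}^{-1} = \L(1, \ldots, 1)$.

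The only delicate point, and the step I would be most careful about, is making sure that the polytope $\Delta(a_1, \ldots, a_s)$ really is the Newton--Okounkov polytope of the line bundle $\L(a_1, \ldots, a_s)$ for every $(a_1, \ldots, a_s) \in C \cap \Z^s$, not just for those in the sub-semigroup generated by actual representations $\pi$. This reduces to checking that the virtual polytope attached to the formal difference $\L_\pi \otimes \L_{\pi'}^{-1}$ in Section \ref{subsec-antican-group-compactification} is independent of the choice of representations $\pi$, $\pi'$ realizing the given support numbers; this independence follows from the cancellation law for Minkowski addition and the additivity in Theorem \ref{th-Delta-pi}(2), together with the fact, recorded in Remark \ref{rem-supp-numbers-virtual-polytope}, that the support-number map is an isomorphism on $\V_\Sigma(\Z)$ when $\Sigma$ is smooth. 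Once this compatibility is in place, the Corollary follows immediately from the previous Proposition and Theorem \ref{th-main}.
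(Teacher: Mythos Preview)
Your proposal is correct and follows essentially the same route the paper intends: the Corollary is stated without proof precisely because it is meant to drop out of the preceding Proposition (which verifies the anticanonical identity \eqref{equ-main-antican} for $\kappa=(1,\ldots,1)$) together with the uniqueness clause of Theorem~\ref{th-main} and the Serre duality/Ehrhart--Macdonald argument from \eqref{equ-intro-linebundle}--\eqref{equ-intro-antican-linebundle2}. Your careful check that the virtual polytope attached to $\L_\pi\otimes\L_{\pi'}^{-1}$ is well defined (via cancellativity and Remark~\ref{rem-supp-numbers-virtual-polytope}) is more explicit than what the paper writes down, but it is exactly the kind of verification the paper is silently relying on.
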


\begin{Rem}  \label{rem-non-smooth-gp-compactification}
As in the last remark in the introduction, this approach works for non-smooth group compactifications as well to obtain similar formula for the anticanonical class. Although in this case the anticanonical class might not be a Cartier divisor. 
\end{Rem}

%\begin{Rem} \label{rem-NO-polytope-gp-comp-string-polytope}
%{\bf More generally, one can define $\Delta(\pi)$ for any connected reductive algebraic group $G$ using the so-called string polytopes (Section \ref{subsec-flag-var-Schubert-var}). The string polytopes are not in general additive for all values of $\lambda$ but in light of Proposition \ref{prop-family-piecewise-lin} there is a full dimensional rational convex cone $C \subset \R^s$ such that the map $(a_1, \ldots, a_s) \mapsto \Delta(a_1, \ldots, a_s)$ is additive for $(a_1, \ldots, a_s) \in C$ and hence defines a linear family of polytopes. Thus our approach to compute the anticanonical class of a group compactification works more generally for any connected reductive algebraic group.} 
%\end{Rem}

%{\bf Add the example of $\SL(2, \C)$.}

%\begin{Rem}
%The above extends to spherical varieties. 
%Beside spherical varieties there are other classes of varieties that there is correspondence between line bundles and convex polytopes, e.g. weight varieties. Also for flag varieties there are many ways to assign convex polytopes to line bundles. 
%\end{Rem}

%To apply our approach to recover the formula for the anticanonical class of the flag variety $G/B$ one should know some information about the defining inequalities of the string cone. On the other hand, the formula for the anticanonical class can be obtained by simpler geometric methods.
%\end{Rem}

\end{document}